\newtheorem{theorem}{Theorem}[section]
\newtheorem{lemma}{Lemma}[section]
\newtheorem{proposition}{Proposition}[section]
\theoremstyle{definition}
\newtheorem{definition}{Definition}[section]
\newtheorem{open problem}{Open Problem}
\newtheorem{remark}{Remark}
\def\cal{\mathcal}
\let\Re=\undefined
\DeclareMathOperator{\Re}{Re}
\let\Im=\undefined
\DeclareMathOperator{\Im}{Im}
\begin{document}
\title[Wave equation with slowly decaying potential\ldots \ldots
 ]{Wave equation with slowly decaying potential: asymptotics of
 solution and wave operators}

\author{Sergey A. Denisov}
\address{
\begin{flushleft}
University of Wisconsin--Madison\\  Mathematics Department\\
480 Lincoln Dr., Madison, WI, 53706, USA\\
  denissov@math.wisc.edu
\end{flushleft}
  }
 \maketitle
\begin{abstract}
In this paper, we consider one-dimensional wave equation with
real-valued square-summable potential. We establish the long-time
asymptotics of solutions by, first, studying the stationary problem
and, secondly, using the spectral representation for the evolution
equation. In particular, we prove that part of the wave travels
ballistically if $q\in L^2(\mathbb{R}^+)$ and this result is sharp.
\end{abstract} \vspace{1cm}

\section{Introduction}

We will consider the Cauchy problem for the wave equation
\begin{eqnarray}
y_{tt}(x,t)=y_{xx}(x,t)-q(x)y(x,t), \quad y(x,0)=\phi(x),\quad
y_t(x,0)=\psi(x),\nonumber \\ y(0,t)=0,\quad t\geq 0, x\in
\mathbb{R}^+ \label{wave}
\end{eqnarray}
where $q(x)$ is time-independent, real-valued, and $q(x)\in
L^2(\mathbb{R}^+)$. In the free case (i.e., when $q=0$),
\[
y(x,t)=\left(\frac{\phi_o(x+t)+\phi_o(x-t)}{2}+\frac{1}{2}
\int_{x-t}^{x+t} \psi_o(s)ds\right)\chi_{x>0}
\]
where $\phi_o$ and $\psi_o$ are odd continuations of $\phi$ and
$\psi$ to $\mathbb{R}$.

 Assuming that $\phi$ and $\psi$ are smooth and compactly
supported, this formula gives the classical solution that propagates
with finite speed.

If $q\neq 0$, the stationary operator for the problem is
one-dimensional Schr\"odinger

\begin{equation}
Hf=-f''+qf,\quad f(0)=0, \quad f\in W^{2,2}(\mathbb{R}^+)
\label{Schrodinger}
\end{equation}
The conservation law is
\begin{equation}
E(t)=\langle y_t,y_t\rangle+\langle Hy,y\rangle ={\rm const}
\label{conservation}
\end{equation}

In this paper, we will study the long-time behavior of $y(x,t)$
which is the main question of the scattering theory for
(\ref{wave}).
 The scattering theory of wave
equation is the classical subject and was extensively studied in the
literature (see \cite{lax} for an excellent source of information).
It is known that the spectrum of stationary operator can be pure
point as long as $q\in L^p(\mathbb{R}^+)$ with $p>2$ \cite{naboko,
simon3} and that will show the sharpness of our result in the
$L^p$--scale for potential. The method is a modification of the one
employed for Dirac operator \cite{den1}. We heavily relied on the
``trace identities" obtained in \cite{ks}. The paper \cite{ks} can
also be a source for the proofs of various well-known results we
mention in the text. In the context of polynomials orthogonal on the
unit circle, similar statements were proved  in \cite{ds1, den2,
simon2}.\bigskip

The organization of the paper is as follows: in the second section,
we introduce the modified Jost function and obtain its
multiplicative representation. In the third section, we prove
asymptotics of the generalized eigenfunctions for the stationary
Schr\"odinger operator. In the fourth one, we apply this result to
prove theorem on asymptotics of solution to wave equation. For that,
we need to additionally assume that potential oscillates in a
certain way. The last section contains the proof of the existence of
modified wave operators in general case. In the Appendix, we
collected some standard results we used in the paper.

We will use the following definition for the (inverse) Fourier
transform of the function $f(x)$
\[
\hat{f}(k)=\int_\mathbb{R} f(x)e^{ik x}dx
\]
\bigskip\bigskip

\section{Schr\"odinger operator with $L^2(\mathbb{R}^+)$
potential: some preliminaries}

\subsection{Modified Jost solutions}

Let $u(x,k)$ be the solution to
\[
-u''+qu=Eu, \quad u(0,k)=0, u'(0,k)=1
\]
In this paper we will always take $E=k^2$. Assume that $q\in
L^2(\mathbb{R}^+)$ which by Weyl's theorem implies $\sigma_{\rm
ess}(H)=[0,\infty)$.
 Denote the spectral measure of $H$ by
$d\rho(E)$. We have the following generalized Plancherel identity
\[
\int_\mathbb{R}|\breve{f}(E)|^2d\rho(E)=\|f\|_2^2
\]
for any $f\in L^2(\mathbb{R}^+)$ where
\[
\breve{f}(E)=\int_0^\infty f(x)u(x,k)dx
\]
and the integral is understood in the $L^2$--sense. The case of
square summable potential is well-understood now, e.g., the
criterion for $q\in L^2(\mathbb{R}^+)$ in terms of $d\rho$ was
obtained in \cite{ks}. Let us write
\[
d\rho(E)=d\rho_s(E)+\mu(E)dE,\quad E>0
\]
where $d\rho_s$ is singular part and $\mu$ is the density of a.c.
part.

 We will need to control the asymptotical behavior of $u(x,k)$
for $x\to+\infty$. The pointwise in $k$ results of that type were
obtained in \cite{christ} for $q\in L^p(\mathbb{R}^+)$ with $p<2$.
We will obtain the asymptotics of $u(x,k)$ for real $k$ but in the
integral sense. To this end, we take $k\in \mathbb{C}^+$ and define
the  modified Jost functions as follows.\bigskip

For arbitrary large $R$, consider the truncation
$q_R(x)=q(x)\chi_{x<R}$. Now that $q_R\in L^1(\mathbb{R}^+)$, we can
define the usual Jost solution $j(x,k,R)$  by requiring
\begin{equation}
-j''+q_Rj=k^2j, \quad j(x,k,R)=e^{ikx}, x>R \label{jost}
\end{equation}
The function $j(x,k,R)$ exists not only for any $k\in \mathbb{C}^+$
but also for real $k\neq 0$. That, for instance, follows from the
integral equations for $j(x,k,R)$. Indeed, rewrite (\ref{jost}) as

\[
J'= \left[
\begin{array}{cc}
0 & 1\\
q_R-k^2 & 0
\end{array}
\right]J, \quad J=\left(
\begin{array}{c}
j\\
j'
\end{array}
\right)
\]
and for
\[
Z=\left[
\begin{array}{cc}
e^{ikx} & e^{-ikx}\\
ike^{ikx} & -ike^{-ikx}
\end{array}
\right]^{-1}J
\]
we have
\begin{equation}
 Z'=\frac{iq_R}{2k}\left[\begin{array}{cc}
-1 & -e^{-2ikx}\\
e^{2ikx} & 1
\end{array}\right]Z, \quad Z=(z_1,z_2)^t, \quad Z(R)=(1,0)^t \label{mono}
\end{equation}
Diagonalizing
\begin{equation}
Z=\left[
\begin{array}{cc}
e^{i\phi} & 0\\
0 & e^{-i\phi}
\end{array}
\right]\Psi, \quad \phi(x,k,R)=(2k)^{-1}\int_x^R q(t)dt
\label{relation-1}
\end{equation}
we finally get
\[
\Psi'=\frac{iq_R}{2k} \left[
\begin{array}{cc}
0 & -e^{-2ikx-2i\phi(x,k,R)}\\
e^{2ikx+2i\phi(x,k,R)} & 0
\end{array}
\right]\Psi, \quad \Psi=(\psi_1,\psi_2)^t
\]

 Due to asymptotics of $j$ at infinity, we have integral equations
 for $\psi_1,\psi_2$:

\begin{eqnarray}
\psi_1(x,k,R)=1+\frac{i}{2k}\int_x^\infty q_R(t)e^{-2ikt-2i\phi(t,k,R)}\psi_2(t,k,R)dt\nonumber\\
\psi_2(x,k,R)=-\frac{i}{2k} \int_x^\infty
q_R(t)e^{2ikt+2i\phi(t,k,R)}\psi_1(t,k,R)dt \label{ie}
\end{eqnarray}

Then, one can easily show that iterations converge for any $k\in
\overline{\mathbb{C}^+}\backslash \{0\}$. Moreover, if $k\in
\mathbb{C}^+$, then this convergence is uniform in $R$ since $q\in
L^2(\mathbb{R}^+)$. The function $j(0,k,R)$ is called the Jost
function for $q_R$, the truncated $q$. We will denote it by
$j(k,R)$. Clearly, we have
\begin{eqnarray*}
j(k,R)=j(0,k,R)=\exp(i\phi(0,k,R))\psi_1(0,k,R)+\exp(-i\phi(0,k,R))\psi_2(0,k,R)=\\
=\exp(i\phi(0,k,R))j_m(k,R)
\end{eqnarray*}
where
\begin{equation}
j_m(k,R)=\psi_1(0,k,R)+\exp(-2i\phi(0,k,R))\psi_2(0,k,R)\label{relation0}
\end{equation}
We will call $j_m(k,R)$ the modified Jost function for truncated
$q$. The function $j(k,R)$ is analytic in $k$ in $\mathbb{C}^+$ and,
in general, might have zeroes on $i\mathbb{R}^+$. If these zeroes
are denoted by $\{ik_j(R)\}$, then $\{-k_j^2(R)\}$ are the negative
eigenvalues of $H_R$.  Since $j(x,k,R)$ is well-defined for real
$k\neq 0$ and the Wronskian $W(j(x,k,R), \overline{j(x,k,R)})=2ik$,
we have
\[
u(x,k)=(2ik)^{-1}(\overline{j(k,R)}j(x,k,R)-j(k,R)\overline{j(x,k,R)}),\quad
x\in [0,R]
\]
In particular, taking $x=R$, we get
\begin{eqnarray}
u(R,k)=(2ik)^{-1}(\overline{j(k,R)}e^{iRk}-j(k,R)e^{-iRk})=\quad\quad\quad\quad\quad\nonumber \\
=(2ik)^{-1}(\overline{j_m(k,R)}e^{iRk-i\phi(0,k,R)}-j_m(k,R)e^{-iRk+i\phi(0,k,R)})
\label{relation}\\\nonumber\\
k^2u(R,k)^2+u'(R,k)^2=|j(k,R)|^{2}=|j_m(k,R)|^{2}\label{modul}
\end{eqnarray}

Later on, we will need a formula for $\partial_R j(k,R)$.
\begin{lemma} For any real $k\neq 0$, we have
\begin{equation}
\partial_R
j(k,R)=\frac{q(R)}{2ik}(-j(k,R)+\overline{j(k,R)}e^{2ikR})\label{derivative}
\end{equation}
\end{lemma}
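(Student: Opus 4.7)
The plan is to use the fact that on $[0,R]$ the function $j(x,k,R)$ is determined by its Cauchy data $(e^{ikR},ike^{ikR})$ at $x=R$ together with the $R$-independent equation $-y''+qy=k^2y$, so $\partial_R j(x,k,R)$ solves the same equation on $[0,R]$ with Cauchy data at $x=R$ that encodes how those data depend on $R$.

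To make this precise, I would first convert (\ref{jost}) into the integral equation
\begin{equation*}
j(x,k,R)=e^{ikx}+\int_x^R \frac{\sin(k(t-x))}{k}\,q(t)\,j(t,k,R)\,dt,\qquad x\in[0,R],
\end{equation*}
via variation of parameters (using the free Green function of $y''+k^2y=f$) together with the requirement that $j(\cdot,k,R)=e^{ikx}$ on $(R,\infty)$. For fixed real $k\neq 0$ and $q\in L^2_{\mathrm{loc}}$, a standard contraction argument gives joint continuity in $(x,R)$ and absolute continuity in $R$, justifying differentiation in $R$. Bringing the upper-limit term outside the integral and using $j(R,k,R)=e^{ikR}$ then yields an integral equation of the same form for $v_R(x):=\partial_R j(x,k,R)$, with source $(\sin k(R-x)/k)\,q(R)\,e^{ikR}$. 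Evaluating at $x=R$ gives $v_R(R)=0$, and differentiating once in $x$ before evaluating at $x=R$ gives $v_R'(R)=-q(R)\,e^{ikR}$.

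By linearity $v_R$ solves $-v''+qv=k^2v$ on $[0,R]$, and for real $k\neq 0$ the Wronskian $W(j(\cdot,k,R),\overline{j(\cdot,k,R)})=-2ik\neq 0$, so $\{j,\bar j\}$ is a basis of solutions. Writing $v_R=\alpha\,j+\beta\,\bar j$, matching Cauchy data at $x=R$, and solving the resulting $2\times 2$ system yields $\alpha=-q(R)/(2ik)$ and $\beta=q(R)\,e^{2ikR}/(2ik)$. Setting $x=0$ gives precisely (\ref{derivative}).

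The main technical obstacle is regularity: since $q$ is only in $L^2$, the identity $v_R'(R)=-q(R)\,e^{ikR}$, and hence (\ref{derivative}), holds only for almost every $R$. This is enough for the paper's subsequent use, because $j(k,R)$ and $\overline{j(k,R)}$ are continuous in $R$, so both sides of (\ref{derivative}) are locally $L^2$ in $R$ and the identity holds in that sense. Everything else is routine: the existence, regularity, and Wronskian identities needed are already available from the contraction argument applied to (\ref{mono})--(\ref{ie}) above.
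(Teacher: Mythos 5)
Your proof is correct, and it reaches (\ref{derivative}) by a route that is conceptually the same as the paper's but packaged quite differently. The paper stays in the first-order $Z$-coordinates of (\ref{mono}): it differentiates the Volterra equations for $z_1,z_2$ in $R$, recognizes that $(\partial_R z_1,\partial_R z_2)^t$ is the fundamental matrix $Y(R,x)$ applied to the fixed vector $\tfrac{iq(R)}{2k}(1,-e^{2ikR})^t$, and then uses the structural identity (\ref{idez1}) together with the expressions of $z_{1},z_2$ at $x=0$ in terms of $j(k,R)$ and $j'(0,k,R)$. You instead work with the second-order equation directly: differentiate the $\sin$-kernel integral equation in $R$, read off that $v_R=\partial_R j(\cdot,k,R)$ solves $-v''+q_Rv=k^2v$ with Cauchy data $v_R(R)=0$, $v_R'(R)=-q(R)e^{ikR}$, and transport back to $x=0$ by expanding in the basis $\{j,\bar j\}$ (valid for real $k\neq0$, where $\bar j$ is also a solution and the Wronskian is $\mp2ik\neq0$); your $\alpha=-q(R)/(2ik)$, $\beta=q(R)e^{2ikR}/(2ik)$ check out. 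The two arguments are the same transport-of-Cauchy-data idea, since the columns of $Y(R,x)$ are exactly $j$ and $\bar j$ in the $Z$-coordinates; what your version buys is transparency (no need for the identity (\ref{idez1}) or the formulas for $z_{1,2}(0,R)$), while the paper's version fits seamlessly with the machinery of (\ref{mono})--(\ref{ie}) already set up. Your closing remark that the identity holds for a.e.\ $R$, in the locally $L^2$ sense, matches the paper's own caveat (``this identity is for $L^2$ functions in $R$'') and is all that the later application at (\ref{oups}) requires.
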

\begin{proof}
Denote the fundamental matrix for (\ref{mono}) by $Y(x_1,x)$ such
that
\[
\partial_{x}Y=\frac{iq_R(x)}{2k}\left[\begin{array}{cc}
-1 & -e^{-2ikx}\\
e^{2ikx} & 1
\end{array}\right]Y, \quad Y(x_1,x_1)=I
\]
Notice that
\begin{equation}
Y(R,x)= \left(
\begin{array}{cc}
z_1(x,R) & \overline{z}_2(x,R)\\
z_2(x,R) & \overline{z}_1(x,R) \end{array}\right)\label{idez1}
\end{equation}
by (\ref{mono}). Write
\begin{eqnarray*}
z_1(x,R)=1+\int_x^R \frac{iq(t)}{2k}z_1(t,R)dt+\int_x^R
\frac{iq(t)}{2k}e^{-2ikt}z_2(t,R)dt\\
z_2(x,R)=-\int_x^R \frac{iq(t)}{2k}e^{2ikt}z_1(t,R)dt-\int_x^R
\frac{iq(t)}{2k}z_2(t,R)dt
\end{eqnarray*}
for $x<R$. Differentiating in $R$ for fixed $x$,
\begin{eqnarray*}
\partial_Rz_1(x,R)=\frac{iq(R)}{2k}+\int_x^R \frac{iq(t)}{2k}\partial_Rz_1(t,R)dt+\int_x^R
\frac{iq(t)}{2k}e^{-2ikt}\partial_Rz_2(t,R)dt\\
\partial_Rz_2(x,R)=-\frac{iq(R)}{2k}e^{2ikR}-\int_x^R \frac{iq(t)}{2k}e^{2ikt}\partial_Rz_1(t,R)dt-\int_x^R
\frac{iq(t)}{2k}\partial_Rz_2(t,R)dt
\end{eqnarray*}
and this identity is for $L^2$ functions in $R$. Differentiating in
$x$, we have
\[
(\partial_R z_1(x,R),\partial_R
z_2(x,R))^t=\frac{iq(R)}{2k}Y(R,x)(1,-e^{2ikR})^t
\]
Substitute (\ref{idez1}) and
\[
z_1(0,R)=\frac{1}{2ik}(ikj(0,k,R)+j'(0,k,R)),\quad
z_2(0,R)=\frac{1}{2ik} (ikj(0,k,R)-j'(0,k,R))
\]
to get (\ref{derivative}).
\end{proof}
Our goal is to prove asymptotics of $j_m(k,R)$ as $R\to\infty$.
Having that at hand, we can obtain the asymptotics for $u(R,k)$ by
(\ref{relation}). At this point, it is crucial that we work with
analytic extension of $j_m(k,R)$ to the upper half-plane.\bigskip

As we mentioned before, one can prove that $j_m(k,R)\to j_m(k)$
uniformly over the compacts in $\mathbb{C}^+$ by working with
integral equations (\ref{ie}). Instead, one can use the
determinantal representation for $j(k,R)$ which was advocated in
\cite{killip}. It is a rather well-known fact that
\begin{equation}
j(k,R)=\det \left(\frac{H_R-k^2}{H^0-k^2}\right)=\det
(I+R^0(k^2)q_R)\label{relation2}
\end{equation}
where $H^0=-\partial^2$ with Dirichlet condition at zero,
$R^0(z)=(H^0-z)^{-1}$, and $\det$ is the perturbation determinant.
The regularization of perturbation determinant gives
\[
\det (I+R^0(k^2)q_R)=\exp({\rm tr} (R^0(k^2)q_R)){\det}_2
(I+R^0(k^2)q_R)
\]
Since
\begin{equation}
[R^0f](x)=-\int\limits_0^\infty
\frac{e^{ik|x-y|}-e^{ik(x+y)}}{2ik}f(y)dy\label{reso}
\end{equation}
we obtain \begin{eqnarray} \det (I+R_0(k^2)q_R)=\exp\left( -\int_0^R
\frac{1-e^{2ikx}}{2ik}q(x)dx\right){\det}_2 (I+R_0(k^2)q_R)=\\
=\exp\left(-\frac{1}{2ik}  \int_0^R
q(x)dx\right)\exp\left((2ik)^{-1} \hat{q}_R(2k)\right){\det}_2
(I+R_0(k^2)q_R)\nonumber
\end{eqnarray}
where $\hat{q}_R(k)$ is the (inverse) Fourier transform of $q_R$. By
(\ref{relation-1}), (\ref{relation0}), and (\ref{relation2}),
\[
j_m(k,R)=\exp\left((2ik)^{-1} \hat{q}_R(2k)\right){\det}_2
(I+R_0(k^2)q_R)
\]
so\bigskip

\begin{definition}({\bf Modified Jost function})
\[
j_m(k,R)\to j_m(k)=\exp\left((2ik)^{-1} \hat{q}(k)\right){\det}_2
(I+R_0(k^2)q)
\]
\end{definition}\bigskip
and the convergence is uniform over the compacts in $\mathbb{C}^+$
due to $q\in L^2(\mathbb{R}^+)$ and the standard properties of the
regularized determinants.\bigskip

For the Weyl-Titchmarsh function $m_R(z)$ of truncated potential, we
have two different representation. One is through the spectral
measure
\[
m_R(z)=\int\left(\frac{1}{E-z}-\frac{E}{1+E^2}\right)d\rho_R(E)+C_R,
\quad \int_\mathbb{R} \frac{d\rho_R(E)}{1+E^2}dE<C
\]
and the other one is through the so-called Weyl's solution, which in
our case  is a multiple of the Jost solution, so
\[
m_R(z)=\frac{j'(0,k,R)}{j(0,k,R)}, \quad z=k^2
\]
Since $q_R$ is compactly supported, $d\rho_R$ is absolutely
continuous on $\mathbb{R}^+$ and $j(0,k,R)$ is continuous up to the
real line which gives

\[
\pi
\mu_R(E)=\frac{1}{2i}\left(\frac{j'\overline{j}-\overline{j'}j}{|j|^2}\right)=k|j(k,R)|^{-2}
\]
In the next section, we will show that the same factorization
identity holds for $j_m(k)$ and $\mu(E)$.

\subsection{The $a(k)$--coefficient and its modification}

Take the truncated potential $q_R$, continue it to $\mathbb{R}^-$ by
zero, and consider the Schr\"odinger equation on the line. Then, for
the Jost solution
\[
j(x,k,R)=a(k,R)e^{ikx}+b(k,R)e^{-ikx}, \quad x<0
\]

The functions $a$ and $b$ possess many interesting properties that
we list below: \bigskip
\begin{itemize}
\item[1.]
The identity $|a(k,R)|^2=1+|b(k,R)|^2$ is true for any $k\in
\mathbb{R}\backslash\{0\}$. Consequently, $|a(k,R)|\geq 1$ on the
real line. Also, $a(-k,R)=a(k,R)$ so $|a(k,R)|$ is even.
\item[2.]  The function $a(k,R)$ is analytic  in $k\in \mathbb{C}^+$ and is
continuous up to $\mathbb{R}\backslash \{0\}$. It can have zeroes
only on $i\mathbb{R}^+$. If they are denoted by $\{i\xi_j(R)\}$,
then the negative eigenvalues of the operator
\[
-\frac{d^2}{dx^2}+q_R\chi_{x>0}, \quad x\in \mathbb{R}
\]
are exactly $\{-\xi_j^2(R)\}$.
\item[3.] The functions $m$, $j$, $a$, and $b$ are related by
\[
j(k,R)=a(k,R)+b(k,R), \quad
m_R(k^2)=ik\frac{a(k,R)-b(k,R)}{a(k,R)+b(k,R)}
\]
\begin{equation}
a(k,R)=\left(\frac{m_R(k^2)+ik}{2ik}\right)j(k,R) \label{id3}
\end{equation}\bigskip

By (\ref{relation0}) and (\ref{id3}), we have
\[
a(k,R)=\exp\left( \frac{i}{2k}\int_0^\infty
q_R(x)dx\right)a_m(k,R),\quad  a_m(k,R)=
\left(\frac{m_R(k^2)+ik}{2ik}\right)j_m(k,R)
\]
so $a_m$ and $a$ are different only by a singular function that (in
its multiplicative representation) corresponds to a jump at zero
with the mass \[  \sim\int_0^\infty q_R(x)dx\] which can grow in
$R$.\bigskip

\item[4.] We have
\begin{equation}
a_m(k,R)=1-\frac{1}{8ik^3} \int_0^\infty
q_R^2(x)dx+\bar{o}(k^{-3})\label{asimut}
\end{equation}
 as $\Im k\geq 0$ and $|k|\to\infty$ (indeed, that can be easily observed since $a_m(k,R)=\psi_1(0,k,R)$
from~(\ref{ie})).
\end{itemize}\bigskip\bigskip

\begin{definition}{\bf (Modified $a(k)$--coefficient)}
\[
a_m(k,R)\to a_m(k)=\left(\frac{m(k^2)+ik}{2ik}\right)j_m(k),\quad
k\in \mathbb{C}^+
\]
\end{definition}
Similarly to (\ref{asimut}),
\begin{equation}
a_m(k)=1-\frac{1}{8ik^3} \int_0^\infty q^2(x)dx+\bar{o}(k^{-3}),
\quad \Im k\to+\infty\label{as}
\end{equation}
One can easily show that $a_m(k)$ is analytic in $\mathbb{C}^+$. If
its zeroes are denoted by $\{i\xi_j\}$, then the negative
eigenvalues of
\[
H_l=-\frac{d^2}{dx^2}+q\chi_{x>0}, \quad x\in \mathbb{R}
\]
are exactly $\{-\xi_j^2\}$.

\bigskip\bigskip
Next, let us obtain the integral representation for $a_m(k,R)$ and
$a_m(k)$. We have
\[
a(k,R)=B(k,R)\exp\left(\frac{1}{i\pi}\int_\mathbb{R}
\frac{1+tk}{(t-k)(1+t^2)}\ln |a(t,R)|dt\right)
\]
and $B(k,R)$ is the  standard Blaschke product
\[
B(k,R)=\prod_{j} \frac{k-i\xi_j(R)}{k+i\xi_j(R)}
\]
Taking $k=iy$, $y\to +\infty$ we have
\[
\prod_{j}
\frac{y-\xi_j(R)}{y+\xi_j(R)}\exp\left(\frac{1}{\pi}\int_\mathbb{R}\frac{y}{t^2+y^2}
\ln |a(t,R)|dt\right)\sim 1+\frac{1}{2y}\int_0^\infty q_R(x)dx
\]
Since $|a(t,R)|\geq 1$ for real $t$,
\[
\int_\mathbb{R} \ln |a(t,R)|dt=\frac \pi2 \int_0^\infty
q_R(x)dx+2\pi\sum_j \xi_j(R)
\]
and, since $|a(t,R)|$ is even in $t$,
\begin{equation}
a_m(k,R)=B_m(k,R)\exp\left( \frac{1}{\pi i} \int_\mathbb{R} K(t,k)
\ln |a_m(t,R)|dt\right) \label{id5}
\end{equation}
where
\[
K(t,k)=\frac{t^2}{k^2(t-k)}
\]
is a modified Cauchy kernel. For the modified Blaschke product,
\[
B_m(k,R)=\prod_{j}
\left(\frac{k-i\xi_j(R)}{k+i\xi_j(R)}\exp(2ik^{-1}\xi_j(R))\right)
\]

We can rewrite (\ref{id5}) as
\begin{equation}
a_m(k,R)=B_m(k,R)\exp\left( \frac{1}{2\pi i} \int_\mathbb{R} K(t,k)
\ln \left( \frac{|m_R(t^2)+it|^2}{4\pi |t|\mu_R(t^2)} \right)dt
\right)\label{id6}
\end{equation}

By \cite{ks}, formula $(1.33)$,
\begin{equation}
\frac 23\sum_j \xi_j^3 +\frac{1}{\pi} \int_0^\infty t^2 \ln \left(
\frac{|m(t^2)+it|^2}{4\pi |t|\mu(t^2)} \right)dt=\frac 18
\int_0^\infty q^2(x)dx \label{relq1}
\end{equation}
if $q\in L^2(\mathbb{R})$, where
\[
\frac{|m(t^2)+it|^2}{4\pi |t|\mu(t^2)} \geq 1
\]
for a.e. $t$.

Notice that since $|m(t^2)+it|^2\geq t^2$, we have for any $I\subset
\mathbb{R}^+$
\begin{equation}
\int_{I} \ln \mu(t^2)dt>-C_1-C_2\int_0^\infty q^2(x)dx
\label{uniformia}
\end{equation}

\begin{definition}{\bf (Modified Blaschke product).}\hspace{0.5cm}
If $\{-\xi_j^2\}$ are negative eigenvalues of $H_l$, then we define
\[
B_m(k)=\prod_{j}
\left(\frac{k-i\xi_j}{k+i\xi_j}\exp(2ik^{-1}\xi_j)\right)
\]

\end{definition}

\begin{lemma}
For any $k\in \mathbb{C}^+$,
\[
a_m(k)=B_m(k)\exp\left( \frac{1}{2\pi i} \int_\mathbb{R} K(t,k) \ln
\left( \frac{|m(t^2)+it|^2}{4\pi |t|\mu(t^2)} \right)dt \right)
\]
\end{lemma}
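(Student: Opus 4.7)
The plan is to derive the claim by passing to the limit $R\to\infty$ in the analogous identity (\ref{id6}) already proved for the truncated potential $q_R$. One needs to show that, uniformly on compacts of $\mathbb{C}^+$, (i) $a_m(k,R)\to a_m(k)$; (ii) $B_m(k,R)\to B_m(k)$; and (iii) the exponential factor on the right of (\ref{id6}) converges to the exponential on the right of the stated identity. Convergence (i) is already recorded below Definition 2.1: since $q_R\to q$ in $L^2(\mathbb{R}^+)$, the Hilbert--Schmidt operator $R_0(k^2)q_R$ converges to $R_0(k^2)q$, hence ${\det}_2(I+R_0(k^2)q_R)\to {\det}_2(I+R_0(k^2)q)$ on compacts, and the scalar prefactor $\exp((2ik)^{-1}\hat q_R(2k))$ also converges there.

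For (ii), I would feed $q_R$ into the trace identity (\ref{relq1}). Both terms on its left-hand side are nonnegative, so one obtains the uniform estimate $\sum_j \xi_j^3(R)\leq \frac{3}{16}\|q\|_2^2$. A short expansion gives
\[
\frac{k-ia}{k+ia}\exp\bigl(2ia/k\bigr) = 1 + O(a^3/k^3)\quad\text{as } a\to 0,
\]
so cubic summability of $\{\xi_j(R)\}$ is precisely the condition guaranteeing that the infinite product defining $B_m(k,R)$ is uniformly convergent on compacts of $\mathbb{C}^+$. Hurwitz's theorem applied to $a_m(\cdot,R)\to a_m$ then ensures that the zeros in any fixed disk converge with multiplicity, which combined with the uniform cubic bound yields $B_m(k,R)\to B_m(k)$ on compacts.

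The main obstacle is step (iii). From (i) and (ii), the exponential factor in (\ref{id6}) converges to $a_m(k)/B_m(k)$ on compacts of $\mathbb{C}^+$; therefore the integrals in the exponents converge, and one must identify the limit as the integral on the right of the stated identity. The same trace identity supplies the uniform bound
\[
\int_0^\infty t^2\ln\left(\frac{|m_R(t^2)+it|^2}{4\pi|t|\mu_R(t^2)}\right)dt \le \frac{\pi}{8}\|q\|_2^2
\]
on a nonnegative integrand. Norm-resolvent convergence $H_R\to H$ (from $q_R\to q$ in $L^2$) gives $m_R(z)\to m(z)$ uniformly on compacts of $\mathbb{C}^+$; extracting pointwise a.e.\ convergence of the boundary data $m_R(t^2+i0)\to m(t^2+i0)$ and $\mu_R(t^2)\to\mu(t^2)$ on $(0,\infty)$ is the delicate step, handled via the Herglotz representation and the lower bound (\ref{uniformia}). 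Dominated convergence, using the uniform $L^1(t^2\,dt)$ bound above and the behavior of $K(t,k)$ together with (\ref{uniformia}) to control a neighborhood of $t=0$, then pushes the limit inside the integral and yields the stated formula.
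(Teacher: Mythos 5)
Your steps (i) and (ii) are fine and match what the paper needs: the convergence $a_m(k,R)\to a_m(k)$ on compacts is recorded after Definition 2.1, and the uniform $\ell^3$ bound on $\{\xi_j(R)\}$ from (\ref{relq1}), combined with Hurwitz, is exactly the right way to get $B_m(k,R)\to B_m(k)$ (the cubic renormalization $\exp(2ik^{-1}\xi_j)$ is there precisely so that zeros drifting to the origin contribute $1+O(\xi_j^3)$ per factor). The problem is step (iii). Weak (or even norm-resolvent) convergence $H_R\to H$ gives $m_R(z)\to m(z)$ for $z$ in compacts of $\mathbb{C}^+$, but this does \emph{not} yield a.e.\ convergence of the boundary values $m_R(t^2+i0)\to m(t^2+i0)$ or $\mu_R(t^2)\to\mu(t^2)$; convergence of Herglotz functions in the open half-plane says essentially nothing pointwise about their boundary data, and (\ref{uniformia}) is only an integral lower bound on $\ln\mu$, not a tool for extracting pointwise limits. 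Moreover, even granting a.e.\ convergence, your dominated-convergence step has no majorant: a uniform bound on the $L^1(t^2\,dt)$ norms of the nonnegative integrands is not an integrable dominating function, and the entropy-type integral $\int t^2\ln(\cdot)\,dt$ is genuinely only semicontinuous, not continuous, under the available convergence. So the limit cannot be pushed inside the integral by these means.

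The paper's proof is built to avoid exactly this. Setting $h(k)=\int t^2(t-k)^{-1}\ln(\cdot)\,dt$ for the limit data and $f(k,R)=2\pi i k^2\ln(a_m(k,R)B_m^{-1}(k,R))$ for the truncations, it proves only the one-sided inequality $\Im h(k)\le\liminf_R\Im f(k,R)$ via weak convergence of the reflection coefficients $r_R\to r$, the identity $T+|r|^2=1$, and term-by-term lower semicontinuity in the Taylor expansion of $\ln(1-|r|^2)$ (following Killip--Simon). Since $f(k,R)\to f(k)=2\pi ik^2\ln(a_mB_m^{-1})$ by your steps (i)--(ii), this gives $0\le \Im h\le\Im f$; then the sum rule (\ref{relq1}) together with the $k\to i\infty$ asymptotics (\ref{as}) shows $f$ and $h$ have the same $O(1/k)$ asymptotics on the imaginary axis, so $\Im f-\Im h$ is a nonnegative harmonic function with $y(\Im f(iy)-\Im h(iy))\to 0$, hence zero by Lemma \ref{hf}. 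In short: one-sided semicontinuity plus conservation of the total entropy (the trace identity) replaces the pointwise convergence and dominated convergence you are trying to invoke, and without that substitute your argument does not close.
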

\begin{proof}
Let us introduce the function
\[
h(k)= k^2\int_\mathbb{R} K(t,k) \ln \left( \frac{|m(t^2)+it|^2}{4\pi
|t|\mu(t^2)} \right)dt=\int_\mathbb{R} \frac{t^2}{t-k} \ln \left(
\frac{|m(t^2)+it|^2}{4\pi |t|\mu(t^2)} \right)dt
\]
This last integral is the Cauchy-type integral of a non-negative
function from $L^1(\mathbb{R})$. For the truncated potential, we
take
\[
f(k,R)= \int_\mathbb{R} \frac{t^2}{t-k} \ln \left(
\frac{|m_R(t^2)+it|^2}{4\pi |t|\mu_R(t^2)} \right)dt=2\pi i k^2 \ln
\left(a_m(k,R)B_m^{-1}(k,R)\right)
\]
For the imaginary parts of $f(k,R)$ and $h(k)$,
\[
\Im f(x+iy,R)=\int_\mathbb{R}\frac{yt^2}{(t-x)^2+y^2}\ln \left(
\frac{|m_R(t^2)+it|^2}{4\pi |t|\mu_R(t^2)} \right)dt
\]
and
\begin{equation}
\Im h(x+iy)=\int_\mathbb{R}\frac{yt^2}{(t-x)^2+y^2}\ln \left(
\frac{|m(t^2)+it|^2}{4\pi |t|\mu(t^2)} \right)dt\label{relq}
\end{equation}

By repeating the proof of theorem 5.1 (\cite{ks}, page 20), one has
\begin{equation}
\Im h(k)\leq \liminf_{R\to \infty} \Im f(k,R), \quad k\in
\mathbb{C}^+ \label{pros}\bigskip
\end{equation}

$\Bigl($This proof we refer to is based on several standard
observations. We sketch the details here following \cite{ks}. If
\[
r_R(k)=\frac{ik-m_R(k^2)}{ik+m_R(k^2)},\quad
r(k)=\frac{ik-m(k^2)}{ik+m(k^2)}
\]
then $r_R(k)\to r(k)$ as $\Im k>0, \Re k>0$ and
\begin{equation}
T(k)+|r(k)|^2=1, \quad T_R(k)+|r_R(k)|^2=1 \quad k\in \mathbb{R}
\label{enez}
\end{equation}
where
\[
T(k)=\frac{4k\pi \mu(k^2)}{|m(k^2)+ik|^2}, \quad T_R(k)=\frac{4k\pi
\mu_R(k^2)}{|m_R(k^2)+ik|^2}
\]
Since both $r_R(k)$ and $r(k)$ have nice boundary behavior near
$\mathbb{R}^+$, one can prove $r_R(k)\to r(k)$ weakly in $L^p(a,b)$
for any $(a,b)\subset \mathbb{R}^+$ and $p\in [1,\infty)$. Thus, for
any $j=1,2\ldots$
\[
\int_a^b \frac{yt^2}{(t-x)^2+y^2} |r(t)|^{2j} dt\leq \liminf_R
\int_a^b \frac{yt^2}{(t-x)^2+y^2} |r_R(t)|^{2j} dt
\]
Using the Taylor expansion for $\ln (1-|r|^2)$, identities
(\ref{enez}), and summing up the estimates above, we have
\[
\int_a^b \frac{yt^2}{(t-x)^2+y^2} \ln T(t) dt\geq \liminf_R \int_a^b
\frac{yt^2}{(t-x)^2+y^2} \ln T_R(t) dt
\]
for any $a,b$. Taking $a\to 0, b\to\infty$ and handling
$\mathbb{R}^-$ similarly, we get (\ref{pros}).$\Bigr)$
\bigskip\bigskip

 On the other hand,
\begin{equation}
f(k,R)\to f(k)={2\pi i k^2} \ln (a_m(k)B_m^{-1}(k))\label{relw}
\end{equation}
so
\[
0\leq \Im h(k)\leq \Im f(k), \quad k\in \mathbb{C}^+
\]

Computing the asymptotics as $\Im k\to+\infty$, we have
\[
f(k)\sim -\frac{\pi}{4 k}\int_0^\infty
q^2(x)dx+\frac{4\pi}{3k}\sum_j \xi_j^3
\]
 by (\ref{relw}) and (\ref{as}). Similarly,
\[
h(k)\sim -\frac{\pi}{4k}\int_0^\infty q^2(x)dx+\frac{4\pi}{3k}\sum_j
\xi_j^3
\]
Thus, by lemma \ref{hf} from Appendix, $f(k)=h(k)$ and the lemma
easily follows.

\end{proof}\bigskip

As a simple corollary, we get

\begin{lemma}For the modified Jost function,
\begin{equation}
j_m(k)=\frac{2ikB_m(k)}{m(k^2)+ki}\exp\left( \frac{1}{2\pi i}
\int_\mathbb{R} K(t,k) \ln \left( \frac{|m(t^2)+it|^2}{4\pi
|t|\mu(t^2)} \right)dt \right), \quad k\in \mathbb{C}^+ \label{muj}
\end{equation}
\end{lemma}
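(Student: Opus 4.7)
The proof is a direct algebraic rearrangement of the preceding lemma, using the definition of the modified $a$-coefficient. The plan is as follows.

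First, I would recall the definition
\[
a_m(k)=\left(\frac{m(k^2)+ik}{2ik}\right)j_m(k),\qquad k\in\mathbb{C}^+,
\]
which was set up immediately after the asymptotic identity (\ref{asimut}) and carried over to the limit $R\to\infty$. Since the factor $\frac{m(k^2)+ik}{2ik}$ is a meromorphic function on $\mathbb{C}^+$ coming from the Weyl--Titchmarsh $m$-function and is nonvanishing on a dense open subset there, I can solve algebraically to get
\[
j_m(k)=\frac{2ik}{m(k^2)+ik}\,a_m(k).
\]

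Next, I would substitute into the right-hand side the factorization formula for $a_m(k)$ established in the previous lemma,
\[
a_m(k)=B_m(k)\exp\!\left(\frac{1}{2\pi i}\int_\mathbb{R} K(t,k)\ln\!\left(\frac{|m(t^2)+it|^2}{4\pi|t|\mu(t^2)}\right)dt\right).
\]
Combining these two identities produces exactly (\ref{muj}); both sides are analytic in $\mathbb{C}^+$, so the identity extends to all $k\in\mathbb{C}^+$ by analytic continuation once it is checked on an open set.

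The only real point that needs care, and which I expect to be the one subtle step, is verifying that the rearrangement does not introduce spurious poles or zeros: the pole of $\frac{2ik}{m(k^2)+ik}$ at any $k$ with $m(k^2)+ik=0$ must correspond to a zero of $a_m(k)$, i.e.\ must be cancelled by a factor from $B_m(k)$. This is exactly the statement that the zeros of $a_m$ in $\mathbb{C}^+$ are the points $i\xi_j$ with $-\xi_j^2$ a negative eigenvalue of $H_l$, and at each such point $m(-\xi_j^2)+i(i\xi_j)=0$ by the definition of the $m$-function at a Dirichlet eigenvalue on the appropriate half-line problem. Once this matching of zeros and poles is observed, the resulting formula for $j_m(k)$ is analytic throughout $\mathbb{C}^+$ (with $j_m$ vanishing only at the Dirichlet eigenvalues of $H$) and the lemma follows.
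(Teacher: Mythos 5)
Your proof is correct and is essentially the paper's own argument: the paper presents this lemma as "a simple corollary" of the preceding factorization of $a_m(k)$, obtained exactly by solving $a_m(k)=\bigl(\frac{m(k^2)+ik}{2ik}\bigr)j_m(k)$ for $j_m(k)$ and substituting. Your closing observation about the pole/zero bookkeeping matches the paper's own remark that the Blaschke product is built not from the zeros of $j_m$, which are ``hiding in the poles of $m(k^2)+ik$.''
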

This formula is the most natural one since the Blaschke product is
built not from zeroes of $j_m$, those are hiding in the poles of
$m(k^2)+ik$.

Notice also that for a.e. $k\in \mathbb{R}$,
\begin{equation}
|j_m(k)|^2=\frac{|k|}{\pi\mu(k^2)} \label{fact}
\end{equation}

\subsection{One result on weak convergence}

The main goal of this subsection  is to prove
\begin{theorem}
As $R\to\infty$,
\[
\pi |j_m(k,R)|^2d\rho(E)\to \sqrt{E}dE
\]
in weak--{\rm ($\ast$)} topology on $\mathbb{R}^+$.\label{weak}
\end{theorem}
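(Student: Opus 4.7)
\emph{Plan.} The starting observation is that $q_R$ has compact support, so $d\rho_R$ is purely absolutely continuous on $\mathbb{R}^+$ with density $\mu_R(E)$ satisfying $\pi\mu_R(E) = k|j_m(k,R)|^{-2}$ (noted just before the present subsection). Thus
\[
\pi|j_m(k,R)|^2\,d\rho_R(E) = \sqrt{E}\,dE\quad\text{on } \mathbb{R}^+
\]
holds exactly for every $R$. Consequently the claim reduces to showing that for every test function $g\in C_c^\infty((0,\infty))$,
\[
\pi\int g(E)\,|j_m(k,R)|^2\,\bigl[d\rho(E)-d\rho_R(E)\bigr]\longrightarrow 0.
\]

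To exploit this, I would apply (\ref{modul}) in the form $|j_m(k,R)|^2 = E\,u(R,k)^2+u'(R,k)^2$. A key point is that $u(x,k)$ is common to $H$ and $H_R$ on $[0,R]$, since $q\equiv q_R$ there. The spectral theorem then yields
\[
\int g(E)\,u(R,k)^2\,d\rho(E) = g(H)(R,R),\qquad \int g(E)\,u'(R,k)^2\,d\rho(E) = \partial_x\partial_y g(H)(x,y)\bigl|_{x=y=R},
\]
together with the analogous identities in which $d\rho_R$ and $g(H_R)$ replace $d\rho$ and $g(H)$. Writing $\tilde g(E) = E\,g(E)$, the desired limit becomes
\[
\bigl[\tilde g(H)(R,R)-\tilde g(H_R)(R,R)\bigr] + \bigl[\partial_x\partial_y(g(H)-g(H_R))(x,y)\bigr]_{x=y=R}\longrightarrow 0.
\]

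The main obstacle is controlling these operator-kernel differences at the diagonal point $(R,R)$. The plan is to represent $g(H)-g(H_R) = -\frac{1}{2\pi i}\oint_{\Gamma} g(z)\bigl[R(z;H)-R(z;H_R)\bigr]dz$ for a contour $\Gamma\subset\mathbb{C}\setminus\mathbb{R}$ enclosing $\mathrm{supp}\,g$, and apply the second resolvent identity $R(z;H)-R(z;H_R) = -R(z;H)(q\chi_{x>R})R(z;H_R)$, whose integral kernel at $(R,R)$ equals $-\int_R^\infty R(z;H;R,s)\,q(s)\,R(z;H_R;s,R)\,ds$. For $z\in\Gamma$ both kernels decay exponentially in $s-R$ uniformly in $z\in\Gamma$, via the decaying Weyl solution of $H$ and the explicit Jost exponential of $H_R$ (which is literally a multiple of $e^{iks}$ for $s>R$ since $q_R\equiv 0$ there), so Cauchy--Schwarz produces a bound of the shape $C(\Gamma)\,\|q\|_{L^2(R,\infty)}$. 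Since $\|q\|_{L^2(R,\infty)}\to 0$, the kernel difference vanishes uniformly on $\Gamma$ and hence after contour integration; the derivative term is handled identically, as differentiating in $x$ and $y$ acts only on the Weyl solutions at $R$ and preserves the exponential decay estimate on $\Gamma$.
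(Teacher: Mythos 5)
Your opening reduction is genuinely different from the paper's argument and is attractive: since $d\rho_R$ is purely absolutely continuous on $\mathbb{R}^+$ with $\pi\mu_R(E)=k|j_m(k,R)|^{-2}$, the identity $\pi|j_m(k,R)|^2\,d\rho_R(E)=\sqrt{E}\,dE$ is exact, and the kernel identities $\int g(E)\bigl(Eu(R,k)^2+u'(R,k)^2\bigr)d\rho(E)=\tilde g(H)(R,R)+\partial_x\partial_y g(H)(x,y)\big|_{x=y=R}$ (and likewise for $H_R$) are legitimate because $u(\cdot,k)$ serves both operators on $[0,R]$. The gap is in the final functional-calculus step. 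A closed contour $\Gamma\subset\mathbb{C}\setminus\mathbb{R}$ ``enclosing $\mathrm{supp}\,g$'' does not exist: $\mathrm{supp}\,g\subset(0,\infty)=\sigma_{\rm ess}(H)$, so any curve enclosing it must cross the spectrum, and the Riesz--Dunford representation $g(H)=-\frac{1}{2\pi i}\oint_\Gamma g(z)R(z;H)\,dz$ is simply not available here. The standard substitute is the Helffer--Sj\"ostrand formula with an almost-analytic extension of $g$, which integrates over a planar region touching $\mathbb{R}$; but then your key estimate --- exponential decay of $R(z;H;R,s)$ and $R(z;H_R;s,R)$ in $s-R$ \emph{uniformly} in $z$ --- fails, since the decay rate is of order $\Im k\sim \Im z/\sqrt{\Re z}$ and degenerates as $z$ approaches the real axis. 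The two requirements (a contour off the spectrum, and uniform decay along it) are in direct tension and cannot both hold. To repair this you would need explicit polynomial-in-$(\Im z)^{-1}$ bounds on $\int_R^\infty |R(z;H;R,s)q(s)R(z;H_R;s,R)|\,ds$, to be absorbed by the $O(|\Im z|^N)$ vanishing of $\bar\partial\tilde g$; that is plausible but is precisely the technical content omitted, and the behavior of the perturbed resolvent kernel near possible embedded singular spectrum is not harmless.

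For contrast, the paper sidesteps the functional calculus entirely: it tests the measure $(Eu(s,k)^2+u'(s,k)^2)\,d\rho(E)$ against the Poisson kernel at a fixed $z\in\mathbb{C}^+$, where the resolvent identity $G=G_0-G_0\ast q\ast G$ together with $q\in L^2(\mathbb{R}^+)$ shows the Poisson transform tends to $\Im\sqrt{-z}$ as $s=t\to\infty$; this yields the uniform mass bound (\ref{uniform}), and convergence of Poisson transforms plus that bound upgrades to weak-$(\ast)$ convergence by a standard approximation argument. If you wish to keep your $d\rho-d\rho_R$ decomposition, testing against Poisson kernels at fixed $z\in\mathbb{C}^+$ (rather than against $C_c^\infty$ functions via a contour) is the way to make it rigorous.
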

\begin{proof}
By the Spectral Theorem,
\[
\int_\mathbb{R} \frac{u(s,k)u(t,k)}{E-z}d\rho(E)=G(s,t,z), \quad
z\in \mathbb{C}^+
\]
 where $G(s,t,z)$ is the Green function for $H$ and the identity
is understood in the distributional sense. Thus,
\[
I=\int_\mathbb{R}
\frac{Eu(s,k)u(t,k)+u'(s,k)u'(t,k)}{E-z}d\rho(E)=\delta(s-t)+zG(s,t,z)+\partial^2_{st}
G(s,t,z)
\]
Substituting the resolvent identity
\[
G=G_0-G_0\ast q\ast G
\]
and the explicit formula (\ref{reso}), we get (again, in the
distributional sense)

\begin{eqnarray*}
I=\sqrt{-z} e^{\sqrt{-z}|s-t|}-z(G_0\ast q\ast G)(s,t,z)\hspace{3cm}\\
+\left(\frac{\sqrt{-z}}{2}+\frac{1}{2\sqrt{-z}}\right)e^{\sqrt{-z}(s+t)}+(\partial_s(G_0\ast
q \ast G)\partial_t)(s,t,z)
\end{eqnarray*}
Taking imaginary part, we have for $z=x+iy$
\begin{eqnarray}
\int_\mathbb{R} \frac{y(Eu(s,k)u(t,k)+u'(s,k)u'(t,k))}{(E-x)^2+y^2}
d\rho(E)=\nonumber\\=\Im\left(\sqrt{-z} e^{\sqrt{-z}|s-t|}-z(G_0\ast
q\ast G)(s,t,z)\hspace{3cm} \right.
\nonumber\\
\left.
+\left(\frac{\sqrt{-z}}{2}+\frac{1}{2\sqrt{-z}}\right)e^{\sqrt{-z}(s+t)}+(\partial_s(G_0\ast
q \ast G)\partial_t)(s,t,z)\right) \label{id8}
\end{eqnarray}
and the integral in the l.h.s. converges for fixed $s$ and $t$ since
\[
\int \frac{d\rho(E)}{1+E^2}<\infty
\]
and $|u(s,k)|, |u(t,k)|\leq Ck^{-1}$ as $k\to+\infty$ (by
(\ref{relation}) and asymptotics of Jost functions). So, (\ref{id8})
holds as a functional identity. Take $s=t\to\infty$ and use $q\in
L^2(\mathbb{R}^+)$ to get
\begin{eqnarray}
\int_\mathbb{R} \frac{y(Eu(s,k)^2+u'(s,k)^2)}{(E-x)^2+y^2} d\rho(E)=
\Im(\sqrt{-z})+\bar{o}(1), \quad s\to\infty
\end{eqnarray}
In particular,
\begin{equation}
\sup_{s>0}\int_\mathbb{R} \frac{Eu(s,k)^2+u'(s,k)^2}{E^2+1}
d\rho(E)<\infty\label{uniform}
\end{equation}
By the standard approximation argument, (\ref{modul}), and
(\ref{uniform}) we have
\[
|j_m(k,R)|^2d\rho(E)\to \pi^{-1}\sqrt E dE
\]
in the weak--($\ast$) topology on $(0,\infty)$.
\end{proof}

\section{ The asymptotics of modified Jost function on the real line}

In this section, we will prove
\begin{theorem} For any finite interval $I$ not containing zero,

\begin{eqnarray}
\int_I \left|\frac{j_m(k,R)}{j_m(k)}-1\right|^2dk\to 0,\label{first}
\\
\int_I \left|{j_m(k,R)}\right|^2d\rho_s(E)\to 0 \label{second}
\end{eqnarray}
as $R\to\infty$.\label{asi}
\end{theorem}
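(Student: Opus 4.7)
The strategy is to prove \eqref{second} first and then deduce \eqref{first} from it via a Brezis--Lieb-type argument; the key inputs are Theorem \ref{weak}, the factorization identity \eqref{fact}, and an auxiliary a.e.\ pointwise convergence $j_m(k,R) \to j_m(k)$ on the real line.

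For \eqref{second}, decompose $d\rho = \mu(E)\,dE + d\rho_s$ and let $g\ge 0$ be continuous with compact support in $(0,\infty)$. Theorem \ref{weak} gives
\[
\int g\,\pi|j_m(k,R)|^2\mu\,dE \;+\; \int g\,\pi|j_m(k,R)|^2\,d\rho_s \;\longrightarrow\; \int g\,\sqrt{E}\,dE.
\]
Assuming $|j_m(k,R)|^2 \to |j_m(k)|^2$ a.e.\ on the real line, Fatou combined with \eqref{fact} rewritten as $\pi|j_m(k)|^2\mu(k^2) = k = \sqrt{E}$ yields the matching lower bound $\liminf_R \int g\,\pi|j_m(k,R)|^2\mu\,dE \geq \int g\,\sqrt{E}\,dE$. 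Since the singular contribution is non-negative, both halves are pinned down: the singular part must tend to $0$, which is \eqref{second}, and simultaneously $\int g\,|j_m(k,R)|^2\mu\,dE \to \int g\,|j_m(k)|^2\mu\,dE$.

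For \eqref{first}, the second conclusion above, together with the change of variable $E=k^2$, delivers the norm convergence
\[
\int_I |j_m(k,R)|^2\mu(k^2)\,dk \longrightarrow \int_I |j_m(k)|^2\mu(k^2)\,dk.
\]
Combined with the a.e.\ pointwise convergence, the Brezis--Lieb lemma in $L^2(I,\mu(k^2)\,dk)$ gives $\int_I|j_m(k,R)-j_m(k)|^2\mu(k^2)\,dk \to 0$. Using $|j_m(k)|^{-2} = \pi\mu(k^2)/k$ from \eqref{fact} and $\kappa:=\inf_I k > 0$,
\[
\int_I \Bigl|\frac{j_m(k,R)}{j_m(k)} - 1\Bigr|^2 dk = \int_I |j_m(k,R)-j_m(k)|^2\frac{\pi\mu(k^2)}{k}\,dk \leq \frac{\pi}{\kappa}\int_I |j_m(k,R)-j_m(k)|^2\mu(k^2)\,dk \to 0,
\]
which is \eqref{first}.

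The main technical obstacle is the a.e.\ pointwise convergence $j_m(k,R) \to j_m(k)$ on the real line. We have uniform convergence on compacta of $\mathbb{C}^+$, but the real-line boundary needs separate treatment. One natural route uses the integral equations \eqref{ie}, whose iterations converge for any real $k\neq 0$ using $q\in L^2(\mathbb{R}^+)$ and the oscillation from the factors $e^{\pm 2ikt\pm 2i\phi(t,k,R)}$; alternatively, \eqref{derivative} exhibits $j(k,R)$ as the solution of a first-order ODE in $R$ with an oscillatory driving term of amplitude $q(R)/(2k)$, and standard oscillatory-integral bounds then yield convergence as $R\to\infty$ for a.e.\ real $k$. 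Since \eqref{first}--\eqref{second} are statements about $R\to\infty$, a.e.\ convergence along a subsequence is already enough, and the full sequence conclusion follows by the usual subsequence-of-subsequence argument.
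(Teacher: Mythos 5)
Your reduction has a genuine and, in fact, fatal gap: the ``auxiliary'' a.e.\ pointwise convergence $j_m(k,R)\to j_m(k)$ for real $k$ that your whole argument rests on is precisely what is \emph{not} available for $q\in L^2(\mathbb{R}^+)$, and it is the obstruction this theorem is designed to circumvent. The paper states this explicitly in Section 2: pointwise-in-$k$ asymptotics of this type are known only for $q\in L^p(\mathbb{R}^+)$ with $p<2$ (Christ--Kiselev, \cite{christ}), and the whole point here is to obtain the asymptotics ``for real $k$ but in the integral sense.'' Neither of your two suggested routes closes this. The iterations of (\ref{ie}) converge uniformly in $R$ only for $k\in\mathbb{C}^+$; on the real line the first iterate is essentially $\int_0^R q(t)e^{2ikt+2i\phi}\,dt$, whose a.e.\ convergence is already Carleson-type, and the higher iterates are multilinear oscillatory integrals whose a.e.\ control for $L^2$ potentials is the well-known open endpoint problem (the ``nonlinear Carleson theorem''). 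The ODE in $R$ from (\ref{derivative}) is just a repackaging of the same system, so ``standard oscillatory-integral bounds'' do not apply. Nor does the subsequence trick rescue you: boundedness gives weak subsequential limits, not a.e.\ convergent subsequences, and extracting an a.e.\ convergent subsequence from $L^2$ convergence would be circular since $L^2$ convergence is what you are trying to prove.

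For contrast, the paper's proof never touches boundary pointwise convergence. It works with $g(k,R)=j_m(k,R)/j_m(k)$ as an analytic function in $\mathbb{C}^+$, integrates $|g-1|^2$ against harmonic measure on a thin isosceles triangle over $[a,b]$, and uses three ingredients: the mean value property to evaluate the cross term $2\Re\int_\gamma g\,w_\xi\,d|k|=2\Re\,g(\xi,R)\to 2$ (which only needs convergence at the single interior point $\xi$, available from uniform convergence on compacts of $\mathbb{C}^+$); the uniform Hardy-type growth bound $|g(k,R)|\le C(\Im k)^{-1/2}$ from the multiplicative representation (\ref{muj}), which together with the vanishing of the harmonic measure at the corners (angle $\phi$ small) controls the sides of the triangle; and Theorem \ref{weak} to evaluate the limit of $\int_{[a,b]}|g|^2w_\xi\,dk$ plus the singular contribution on the base. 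The resulting identity forces both nonnegative quantities in (\ref{first}) and (\ref{second}) to vanish simultaneously. If you want to keep a Brezis--Lieb flavor, you would still need to replace your a.e.\ convergence input by some such complex-analytic device; as written, the proposal does not constitute a proof.
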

\begin{proof}
The proof is standard. We will consider $I\subset \mathbb{R}^+$, the
negative half-line can be handled similarly. Fix any $a,b$ such that
$0<a<b$ and take isosceles triangle $T$ with base $[a,b]$ and the
sides $\gamma_1$, $\gamma_2$. Denote the angle between
$\gamma_{1(2)}$ and a base by
 $\phi$ and $\gamma=\gamma_1\cup\gamma_2\cup [a,b]$.
 Here, $\phi$ is a small positive
angle to be chosen later. Take some reference interior point $\xi\in
T$ and denote the harmonic measure by $\omega_\xi(t)$. We have two
simple estimates
\[
\omega_\xi(t)\sim C|t-a|^{\pi/\phi-1}, \quad t\sim a, \quad
\omega_\xi(t)\sim C|t-b|^{\pi/\phi-1}, \quad t\sim b
\]
The function $\omega_\xi(t)$ is nonnegative and smooth on the sides
of triangle.

Write
\begin{equation}
0\leq
\int_{\gamma}\left|\frac{j_m(k,R)}{j_m(k)}-1\right|^2w_\xi(k)d|k|=
\int_{\gamma}\left(\left|\frac{j_m(k,R)}{j_m(k)}\right|^2+1\right)w_\xi(k)d|k|-I
\label{aha1}
\end{equation}
and
\begin{equation}
I=2\Re \int_{\gamma}\frac{j_m(k,R)}{j_m(k)}w_\xi(k)d|k|=2\Re
\frac{j_m(\xi,R)}{j_m(\xi)}\to 2\label{aha2}
\end{equation}
Consider
\[
A=\int_{\gamma_1\cup\gamma_2}\left|\frac{j_m(k,R)}{j_m(k)}\right|^2w_\xi(k)d|k|
\]
We need the following lemma
\begin{lemma}
Consider $ g(k,R)=j_m(k,R)/j_m(k)$ and fix any $a,b>0$. Then for
$k\in \Omega=\{0<\Im k<1, a<\Re k<b$ we have
\begin{equation}
\left|g(k,R)\right|<C(\Im k)^{-1/2} \label{hardy}
\end{equation}
uniformly in $R$.
\end{lemma}
\begin{proof}The proof is elementary but lengthy.
The multiplicative representation (\ref{muj}) yields
\[
g(k,R)=I_1I_2I_3I_4
\]
where
\[
I_1=\frac{m(k^2)+ik}{m_R(k^2)+ik}\exp\left(\frac{1}{2\pi i}
\int_{a-\delta}^{b+\delta} \frac{t^2}{k^2(t-k)}\ln
\left(\left|\frac{m_R(t^2)+it}{m(t^2)+it}\right|^2\right)dt\right)
\]
\[
I_2=\exp\left(\frac{1}{2\pi i} \int_{a-\delta}^{b+\delta}
\frac{t^2}{k^2(t-k)}\ln \left({\pi
t^{-1}|j_m(t,R)|^2\mu(t^2)}\right)dt\right)
\]
\[
I_3=\exp\left(\frac{1}{2\pi i}\int_{t<a-\delta, t>b+\delta}
\frac{t^2}{k^2(t-k)} \left[\ln \left( \frac{|m_R(t^2)+it|^2}{4\pi
|t|\mu_R(t^2)}\right)-\ln \left( \frac{|m(t^2)+it|^2}{4\pi
|t|\mu(t^2)}\right)\right]dt\right)
\]
and $\delta$ is positive small.
\[
I_4=B_m(k,R)B_m^{-1}(k)
\]
$I_3$ is uniformly bounded because
\[
t^2 \ln \left( \frac{|m(t^2)+it|^2}{4\pi |t|\mu(t^2)}\right),\, t^2
\ln \left( \frac{|m_R(t^2)+it|^2}{4\pi |t|\mu_R(t^2)}\right)\in
L^1(\mathbb{R})
\]
with estimates uniform in $R$ by (\ref{relq1}).  $I_4$ is uniformly
bounded since $\|\xi_j(R)\|_{\ell^3}$ is bounded uniformly in $R$ by
(\ref{relq1}) also.\bigskip

\[
|I_2|\leq \exp\left(\Re \frac{1}{2\pi i } \int_{a-\delta}^{b+\delta}
\left(\frac{1}{(t-k)}+\frac{t+k}{k^2}\right)\ln \left({\pi
t^{-1}|j_m(t,R)|^2\mu(t^2)}\right)dt\right)
\]
We have
\begin{equation}
\int_{a-\delta}^{b+\delta}\pi t^{-1}|j_m(t,R)|^2\mu(t^2)dt<C
\label{simple}
\end{equation}
uniformly in $R$ by theorem \ref{weak} and
\[
\int_0^\infty \frac{t}{\pi|j_m(t,R)|^2 (t^4+1)}dt=\int_0^\infty
\frac{\mu_R(t^2)}{t^4+1}dt<C
\]
uniformly in $R$ by the standard estimates on the spectral measure.
Moreover,
\[
\int_{a-\delta}^{b+\delta} |\ln \mu(t^2)|dt<C
\]
by (\ref{uniformia}).

 Therefore,
\[
\int_{a-\delta}^{b+\delta} \left|\frac{t+k}{k^2}\ln \left({\pi
t^{-1}|j_m(t,R)|^2\mu(t^2)}\right)\right|dt<C
\]
 The application of
Jensen's inequality and (\ref{simple}) to
\[
\exp\left(\Re \frac{1}{2\pi i } \int_{a-\delta}^{b+\delta}
\frac{1}{t-k}\ln \left({\pi
t^{-1}|j_m(t,R)|^2\mu(t^2)}\right)dt\right)
\]
gives
\[
|I_2|<C(\Im k)^{-1/2}
\]
(the standard estimate for the boundary growth of
$H^2(\mathbb{C}^+)$ functions). We are left with estimating $I_1$.
Write
\[
I_1=\frac{P_R(k)}{P(k)}
\]
where
\[
P_R(k)=\frac{1}{m_R(k^2)+ik}\exp\left(\frac{1}{\pi i}
\int_{a-\delta}^{b+\delta} \frac{t^2}{k^2(t-k)}\ln
\left|{m_R(t^2)+it}\right|dt\right)
\]
and
\[
P(k)=\frac{1}{m(k^2)+ik}\exp\left(\frac{1}{\pi i}
\int_{a-\delta}^{b+\delta} \frac{t^2}{k^2(t-k)}\ln
\left|{m(t^2)+it}\right|dt\right)
\]
Let us show that
\[
0<C_1<|P(k)|<C_2, \quad 0<C_1<|P_R(k)|<C_2
\]
uniformly in $R$ and $k\in \Omega$. We will prove this for $P_R(k)$,
the estimates for $P(k)$ are similar. Map $k\in\{\Im k>0, \Re k>0\}$
to $z\in \mathbb{C}^+$ by $z=k^2$ and  consider
\[
Q_R(z)=\frac{1}{m_R(z)+\sqrt{-z}} \exp\left(\frac{1}{\pi i} \int_{I}
\frac{\sqrt{\xi}(\sqrt{\xi}+\sqrt{z})}{2z} \frac{1}{\xi-z}\ln
\left|{m_R(\xi)+\sqrt{-\xi}}\right|d\xi\right)
\]
where $I$ in the image of $[a-\delta,b+\delta]$. We have
\[
Q_R(z)=\frac{1}{m_R(z)+\sqrt{-z}} \exp\left(\frac{1}{\pi i} \int_{I}
 \frac{1}{\xi-z}\ln
\left|{m_R(\xi)+\sqrt{-\xi}}\right|d\xi\right)M_R(z)
\]
with
\[
M_R(z)=\exp\left(\frac{1}{\pi i} \int_{I}
\left(\frac{\sqrt{\xi}(\sqrt{\xi}+\sqrt{z})}{2z}-1\right) \ln
\left|{m_R(\xi)+\sqrt{-\xi}}\right|d\xi\right)
\]
Since
\[
0<C_1<|m_R(\xi)+\sqrt{-\xi}|<C_2|m_R(\xi)|+C_3
\]
and
\[
\int_I |m_R(\xi)|^pd\xi<C(p), \quad p<1
\]
uniformly in $R$ (by Kolmogorov's theorem), we have
\[
0<C_1<|M_R(z)|<C_2
\]
uniformly in $R$ and  $z\in \{\Omega'=\Omega^2\}$. The uniform in
$R$ estimate
\[
0<C_1<\left|\frac{1}{m_R(z)+\sqrt{-z}} \exp\left(\frac{1}{\pi i}
\int_{I}
 \frac{1}{\xi-z}\ln
\left|{m_R(\xi)+\sqrt{-\xi}}\right|dt\right)\right|<C_2
\]
follows from lemma \ref{l6} in  Appendix.
\end{proof}
By (\ref{hardy}), we have

\begin{equation}
A\to \int_{\gamma_1\cup\gamma_2}w_\xi(k)d|k|, \quad
R\to\infty\label{aha3}
\end{equation}
provided that $\phi$ is taken small.
\[
\int_{[a,b]}\left|\frac{j_m(k,R)}{j_m(k)}\right|^2w_\xi(k)d|k|+\pi/2
\int_{[a^2,b^2]}\left|{j_m(k,R)}\right|^2w_\xi(k)k^{-2}d\rho_s(E)=
\]
\begin{equation}
=\pi/2 \int_{[a^2,b^2]}\left|{j_m(k,R)}\right|^2w_\xi(k)
k^{-2}d\rho(E)\to \int_{[a,b]}w_\xi(k)dk\label{aha4}
\end{equation}
by theorem \ref{weak}. Combining  (\ref{aha1}), (\ref{aha2}),
(\ref{aha3}), and (\ref{aha4}), we get the statement of the theorem.
\end{proof}
\section{Asymptotics of solutions to wave equation}

 By the Spectral Theorem, we have the following
representation for the solution to (\ref{wave}) with Cauchy data
 orthogonal to eigenstates corresponding to negative
eigenvalues:

\begin{equation}
y(x,t)=\int_0^\infty
\cos(kt)u(x,k)\breve{\phi}(k)d\rho(E)+\int_0^\infty
\frac{\sin(kt)}{k} u(x,k)\breve{\psi}(k)d\rho(E)\label{spth}
\end{equation}
where $\breve{\phi}, \breve{\psi}$ are generalized Fourier
transforms of $\phi$ and $\psi$ associated with $H$. If one wants
(\ref{spth}) to give the classical solution for $t>0$, it is
sufficient to assume that, $\phi,\psi \in \cal{D}(H^\alpha)$ for
large $\alpha$ to guarantee the absolute convergence of the
integrals after differentiation (e.g.,  $\alpha\geq 3/2$ is
sufficient). We need to assume that
$P_{(-\infty,0)}\phi=P_{(-\infty,0)}\psi=0$ since otherwise the
solutions can grow exponentially and thus are not physical.

 Consider
the second integral in (\ref{spth}). We assume
\begin{equation}
\breve{\psi}(k)/k\in L^2 (d\rho(k)) \label{asum1}
\end{equation}
which is equivalent to $\psi=\sqrt{|H|}g$ for some $g$. This
assumption is rather natural. Indeed, if $H$ has eigenvalue at zero
with eigenfunction $\phi$, then its contribution will yield a linear
growth
\[
\|u(x,t)\|_2\geq Ct
\]
This will occur even in the free case when $\psi$ does not
oscillate.  Although some analysis is possible in the general case,
we will work with stable situation when the $L^2(\mathbb{R})$ norm
of the solution is bounded in time and so will require
(\ref{asum1}). We prove the pointwise ``large time -- large
coordinate" translation--like asymptotics for the initial data in
the set dense in $\sigma_{\rm{ac}}(H)$.  For large $t$ and small
$x$, the solution can behave in rather chaotic way due to the
possible presence of embedded singular continuous spectrum.

\begin{theorem}
Assume that $q\in L^2(\mathbb{R}^+)$ and
\[
\int_0^\infty q(x)dx
\]
converges. Take $f\in \cal{{D}}(\sqrt{|H|}), P_{(-\infty,0)}f=0$ and
let $\phi=f$ and $\psi=-i\sqrt{|H|}f$. Then, for any fixed $x$, we
have
\[
y(T+x,T)\to \mu_f(x), \quad T\to\infty
\]
where $\mu_f(x)\in W^{1,2}(\mathbb{R})$.\label{ch1}
\end{theorem}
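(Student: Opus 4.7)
The plan is to exploit the spectral representation (\ref{spth}) together with the asymptotic structure of $u(R,k)$ from (\ref{relation}) and Theorem \ref{asi}. With $\psi = -i\sqrt{|H|}f$, the spectral transform yields $\breve\psi(k) = -ik\breve f(k)$, so (\ref{spth}) collapses to
\[
y(T+x,T) = \int_0^\infty e^{-ikT}\, u(T+x,k)\, \breve f(k)\, d\rho(E).
\]
I will split $d\rho = \mu(E)dE + d\rho_s$ and work first on a dense subset of initial data (for instance, those $f$ with $\breve f\in C_c^\infty(0,\infty)$), extending to all of $\mathcal{D}(\sqrt{|H|})$ at the end via the uniform $L^2$ bound on $y(\cdot,T)$ supplied by (\ref{conservation}). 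For the $d\rho_s$ part, (\ref{modul}) gives $|u(R,k)|\le|j_m(k,R)|/|k|$, and Cauchy--Schwarz combined with claim (\ref{second}) of Theorem \ref{asi} kills its contribution on any compact subset of $(0,\infty)$.

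For the absolutely continuous part I substitute (\ref{relation}) at $R=T+x$ and multiply by $e^{-ikT}$; the result splits into a stationary piece with phase $e^{ikx-i\phi(0,k,T+x)}\overline{j_m(k,T+x)}$ and a rapidly oscillating piece with phase $e^{-2ikT-ikx+i\phi(0,k,T+x)}j_m(k,T+x)$. The hypothesis that $\int_0^\infty q$ converges forces $\phi(0,k,T+x)\to\phi_\infty(k):=(2k)^{-1}\int_0^\infty q(t)dt$ uniformly on compacts in $k\ne 0$, while claim (\ref{first}) of Theorem \ref{asi} replaces $j_m(k,T+x)$ by $j_m(k)$ with $L^2_{\mathrm{loc}}(dk)$-error vanishing as $T\to\infty$. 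On the dense subset, the oscillating piece disappears by Riemann--Lebesgue (integration by parts against compactly supported $\breve f$), and the stationary one, after rewriting $\mu(E)dE = 2k^2\pi^{-1}|j_m(k)|^{-2}dk$ via (\ref{fact}), leaves the limit
\[
\mu_f(x) = \frac{1}{i\pi}\int_0^\infty \frac{k}{j_m(k)}\, e^{-i\phi_\infty(k)}\, \breve f(k)\, e^{ikx}\, dk.
\]
Since this is a one-sided Fourier transform, Plancherel together with (\ref{fact}) yields $\|\mu_f\|_{L^2(\mathbb{R})}^2\le\int|\breve f|^2\mu(E)dE\le\|f\|_2^2$ and $\|\mu_f'\|_{L^2(\mathbb{R})}^2\le\int E|\breve f|^2\mu(E)dE\le\|\sqrt{|H|}f\|_2^2$, both finite because $f\in\mathcal{D}(\sqrt{|H|})$; hence $\mu_f\in W^{1,2}(\mathbb{R})$.

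The hard part is the uniform-in-$T$ control of the error made in replacing $j_m(k,T+x)$ by $j_m(k)$ and $\phi(0,k,T+x)$ by $\phi_\infty(k)$ inside the spectral integral. Claim (\ref{first}) of Theorem \ref{asi} only delivers $L^2$-convergence on intervals avoiding zero, so the $1/k$ factor from (\ref{relation}) and the $1/k$ blow-up of $\phi$ near the origin must be neutralized by a frequency cutoff --- automatic on the dense subset, but requiring care in the final density step. The extension to all of $\mathcal{D}(\sqrt{|H|})$ uses that $f\mapsto y(\cdot,T)$ is bounded on $L^2$ uniformly in $T$ together with continuity of the map $\breve f\mapsto\mu_f$ in the $L^2(\mu(E)dE)$-norm, which closes the pointwise-in-$x$ convergence statement.
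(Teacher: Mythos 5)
Your treatment of the absolutely continuous part on a compact frequency window $[\delta_1,M_1]$ is essentially the paper's argument: the same substitution of (\ref{relation}) and (\ref{fact}), the same use of claim (\ref{first}) of Theorem \ref{asi} to replace $j_m(k,T+x)$ by $j_m(k)$, Riemann--Lebesgue for the $e^{-2ikT}$ piece, convergence of $\phi(0,k,T+x)$ to $(2k)^{-1}\int_0^\infty q$, and claim (\ref{second}) for the singular part; your limit formula agrees with the paper's $\mu_f$ (with $h(k)=k\breve f(k)/j_m(k)$), and the $W^{1,2}$ computation via Plancherel is correct.

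The gap is in how you remove the frequency cutoff and pass to general $f\in\cal{D}(\sqrt{|H|})$. You propose a density argument over initial data: approximate $f$ by $f_n$ with $\breve f_n\in C_c^\infty(0,\infty)$ and use uniform $L^2$-boundedness of $f\mapsto y(\cdot,T)$ plus $L^2$-continuity of $\breve f\mapsto\mu_f$. That machinery only controls $\|y(T+\cdot,T)-y_n(T+\cdot,T)\|_{L^2(dx)}$ and $\|\mu_f-\mu_{f_n}\|_{L^2(dx)}$, so at best it yields $y(T+\cdot,T)\to\mu_f$ in $L^2(dx)$ --- it cannot produce the asserted convergence at a \emph{fixed} point $x$, since $L^2$-small errors can be arbitrarily large pointwise. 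The paper avoids any density step in $f$: it keeps $f$ fixed, splits the spectral integral into $I_1,\dots,I_4$ over $[\delta,M]$, $[0,\delta]$, $[M,\infty)$, and bounds the two tails \emph{in the sup norm, uniformly in $T$ and $x$}, using exactly the hypothesis $f\in\cal{D}(\sqrt{|H|})$, i.e. (\ref{damp}). Concretely, the high-frequency tail $I_4$ is killed by Cauchy--Schwarz against the uniform estimate (\ref{uniform}),
\[
\sup_{T,x}|I_4|\leq \Bigl(\int_M^\infty |\breve f(k)|^2(k^2+1)\,d\rho(E)\Bigr)^{1/2}\Bigl(\sup_{s}\int_M^\infty \frac{u(s,k)^2}{E+1}\,d\rho(E)\Bigr)^{1/2}\to 0,\quad M\to\infty,
\]
and the low-frequency tail $I_3$ is the value at one point of $g=P_{[0,\delta^2]}e^{-iT\sqrt{|H|}}f$, bounded by the Sobolev embedding $\|g\|_\infty\lesssim \|g\|_2+\|g'\|_2$, which tends to $0$ as $\delta\to 0$ by the Spectral Theorem. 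These two uniform tail bounds are what convert the $L^2_{\rm loc}(dk)$ information of Theorem \ref{asi} into a pointwise statement; your proposal is missing them, and without them the theorem as stated (pointwise in $x$) is not reached. The fix is available to you --- you already have (\ref{damp}) --- but it must replace, not supplement, the $L^2$-density argument.
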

\begin{proof}
Clearly, for this choice of initial conditions,
\[
y(x,t)=e^{-it\sqrt{|H|}}f
\]
and we need to control $e^{-iT\sqrt{|H|}}f$ suitably scaled. If $H$
has zero eigenvalue with eigenfunction $e_0(x)$, then the solution
of wave equation with initial condition $\{e_0(x),0\}$ is
stationary, i.e. $y_0(x,t)=e_0(x)\to 0$ as $x\to\infty$ so we can
always assume without loss of generality that $f$ is orthogonal to
$e_0$. We have
\begin{equation}
\int (k^2+1)|\breve{f}(k)|^2d\rho(E)<\infty \label{damp}
\end{equation}
which guarantees the absolute convergence of the integrals in
(\ref{spth}) due to (\ref{uniform}).

Take any $\delta, M$, $0<\delta<M$. Then
\[
y(T+x,T)=I_1+I_2+I_3+I_4
\]
where
\[
I_1=\int_\delta^M e^{-ikT}\breve{f}(k)u(T+x,k)d\rho_s(E)
\]

\begin{eqnarray*}
I_2= \int_{\delta}^M e^{-ikT} \breve{f}(k) u(T+x,k)\mu(k^2)dE
\end{eqnarray*}

\[
I_3=\int_0^\delta e^{-ikT}\breve{f}(k)u(T+x,k)d\rho(E)
\]
\[
I_4=\int_M^\infty e^{-ikT}\breve{f}(k)u(T+x,k)d\rho(E)
\]
By (\ref{relation}), (\ref{second}), and (\ref{damp}) $I_1$
converges to zero for $T\to\infty$.

Consider $I_3$. Let
\[
g(x)=\int_0^\delta e^{-ikT}\breve{f}(k)u(x,k)d\rho(E)
\]
By the Spectral Theorem, we have $\|g\|_2\to 0, \|g'\|_2\to 0$ as
$\delta\to0$ and so $\|g\|_\infty\to 0$ as $\delta\to 0$.

For $I_4$, Cauchy-Schwarz yields
\[
\sup_{T,x}|I_4|\leq \sup_{T,x}\left(\int_M^\infty
|\breve{f}(k)|^2(k^2+1)d\rho(E)\right)^{1/2}\left(\int_M^\infty
\frac{u(T+x,k)^2}{E+1}d\rho(E)\right)^{1/2}\to 0 ,\quad
\]
as $M\to\infty$ by (\ref{uniform}) and (\ref{damp}).

 For $I_2$, we have ($\delta_1=\sqrt\delta, M_1=\sqrt{M}$)
\begin{eqnarray*}
I_2=\frac{1}{\pi i} \int_{\delta_1}^{M_1} e^{-ikT}
\breve{f}(k)\left(\overline{j_m(T+x,k)}e^{ik(T+x)-i\phi(0,k,T+x)}\right.\\
\left.-j_m(T+x,k)e^{-ik(T+x)+i\phi(0,k,T+x)}\right)\frac{kdk}{|j_m(k)|^2}
\end{eqnarray*}
where we used (\ref{fact}) and (\ref{relation}). By (\ref{damp}),
\[
\breve{f}(k)=j_m(k)h(k)k^{-1}
\]
and $(k^2+1)^{1/2}h(k)\in L^2(\mathbb{R}^+)$.

For any $\delta_1,M_1>0$,
\begin{eqnarray*}
\int_{\delta_1}^{M_1} e^{-ikT} {h(k)}
\left(\frac{\overline{j_m(T+x,k)}}{\overline{j_m(k)}}\,e^{ik(T+x)-i\phi(0,k,T+x)}\right.\\
\left.-\frac{{j_m(T+x,k)}}{{j_m(k)}}\frac{j_m(k)}{\overline{j_m(k)}}\,e^{-ik(T+x)+i\phi(0,k,T+x)}\right)dk
\end{eqnarray*}
converges to
\[
\exp\left(-\frac{i}{2k}\int_0^\infty
q(x)dx\right)\int_{\delta_1}^{M_1} h(k)e^{ikx}dk
\]
by (\ref{first}) and Riemann-Lebesgue lemma. Since $M_1$ and
$\delta_1$ are arbitrary and $h$ decays fast, the theorem is true
with
\[
\mu_f(x)=\frac{1}{\pi i}\left(\int_0^\infty
h(k)e^{ikx}dk\right)\exp\left(-\frac{i}{2k}\int_0^\infty
q(x)dx\right)
\]
\end{proof}
Taking the conjugation, one can prove similar result for initial
conditions of the form $\{f,i\sqrt{|H|}f\}$. Since
\[
\{f,\sqrt{|H|}g\}=\{f_1,i\sqrt{|H|}f_1\}+ \{f_2,-i\sqrt{|H|}f_2\}
\]
with $f_1=(f-ig)/2$ $f_2=(f+ig)/2$, the theorem holds in general
case. Clearly, the simplest way to satisfy $\psi=\sqrt{|H|}g$ with
some $g$ is to take $\psi=0$.

\begin{remark}
How should one take a function $f$ to guarantee that at least part
of the wave will travel ballistically after taking $P_{[0,\infty)}f$
as initial value in Cauchy problem? The theorem just proved tells us
that $f$ should have some nontrivial part in a.c. subspace. Checking
that is not easy in general but if one takes $f$ to be compactly
supported, then $d(P_Ef,f)/dE>0$ for a.e. $E>0$.  The latter
statement can be easily proved by adjusting technique from
\cite{den8}. The theorem \ref{ch1} requires the convergence of the
integral
\begin{equation}
\int_0^\infty q(x)dx \label{ahah}
\end{equation}
That can not be achieved for $q\notin L^1(\mathbb{R^+})$ unless $q$
oscillates but if it does, then the negative spectrum might easily
appear and therefore we have to project away from the negative
eigenspace which is not an easy thing to do (though possible of
course by Gram-Schmidt process). In the next section, we will not
require (\ref{ahah}) and so the asymptotics will be established for
more tangible set of initial data.
\end{remark}
\section{Modified wave operators}

In the previous section, we assumed the conditional convergence to
prove that the part of initial value that corresponds to a.c.
spectrum gives that portion of a wave that undergoes the simple
translation. If the potential is only square summable, this is not
the case anymore. We will study the long-time dynamics by
considering the modified wave operators. Notice that if $H\geq 0$,
then the group $e^{it\sqrt{H}}$ gives the formal solution to the
wave equation.

Let us start with some definition. Take any function $f(x)\in
L^2(\mathbb{R^+})$. Then
\begin{eqnarray*}
e^{it\sqrt{H_0}}f= \frac{2}{\pi}\int_0^\infty e^{ikt}\sin
(kx)\tilde{f}(k)dk= \hspace{3cm}\\
=\frac{1}{2\pi}\left( \int_0^\infty
e^{ik(t-x)}\hat{f}_o(k)dk- \int_0^\infty
e^{ik(t+x)}\hat{f}_o(k)dk\right)
\end{eqnarray*}
where
\[
\tilde{f}(k)=\int_0^\infty f(x)\sin (kx)dx=-i\hat{f_o}(k)/2
\]
$f_o(x)$ is the odd continuation of $f(x)$ and $\hat{f_o}(k)$ is its
(inverse) Fourier transform. Thus, for $t\to+\infty$
\[
e^{it\sqrt{H_0}}f\sim \frac{1}{2\pi} \int_0^\infty
e^{ik(t-x)}\hat{f}_o(k)dk
\]
in $L^2(\mathbb{R}^+)$. Consider the multiplier
\[
M(t,k)=\exp\left(ikt+\frac{i}{2k}\int_0^t q(s)ds\right)
\]
and
\[
[W(t)f](x)=\frac{1}{2\pi} \int_0^\infty e^{-ikx}
M(k,t)\hat{f}_o(k)dk
 \]

The main result of this section is
\begin{theorem}
Let $q\in L^2(\mathbb{R}^+)$ and
\begin{equation}
|q(x)|<C(1+x)^{-1/2} \label{ops}
\end{equation}
If $H_1=P_{[0,\infty)}H$, then the following strong limit exists
\[
{\rm (s)}-\lim_{t\to+\infty} e^{-it\sqrt{H_1}}W(t)
\]
in $L^2(\mathbb{R}^+)$ norm. \label{mmm}
\end{theorem}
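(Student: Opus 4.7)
The plan is to prove existence of the strong limit via Cook's criterion. Fix the dense subspace $\mathcal{D}\subset L^2(\mathbb{R}^+)$ of functions $f$ with $\hat{f_o}\in C_c^\infty$ supported in $\{\epsilon<|k|<K\}$ for some $0<\epsilon<K<\infty$. For $f\in\mathcal{D}$, Plancherel gives a uniform $L^2(\mathbb{R}^+)$-bound on $W(t)f$, and integration by parts in the Fourier integral shows $W(t)f$ lies in the form domain of $\sqrt{H_1}$ uniformly in $t$. Since $e^{-it\sqrt{H_1}}$ is unitary, it suffices to verify
\[
\int_1^\infty \bigl\lVert \dot W(t)f - i\sqrt{H_1}\,W(t)f\bigr\rVert_{L^2(\mathbb{R}^+)}\,dt < \infty,
\]
where direct differentiation gives $\dot W(t)f(x)=\frac{1}{2\pi}\int_0^\infty(ik+iq(t)/(2k))\,e^{-ikx}M(k,t)\hat{f_o}(k)\,dk$.

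To compute $\sqrt{H_1}W(t)f$ one passes to the spectral representation via the generalized eigenfunctions $u(x,k)$. The central observation is that the WKB phase $\phi(0,k,x)=(2k)^{-1}\int_0^x q(s)ds$ built into the outgoing-plane-wave asymptotic of $u(x,k)$ in (\ref{relation}) exactly matches the modifier $(2k)^{-1}\int_0^t q(s)ds$ inside $M(k,t)$ when $x\approx t$: this is the design principle of $W(t)$. Since the packet $W(t)f$ is, by stationary phase in $k$, concentrated near $x\approx t$, a careful stationary-phase analysis of $(W(t)f)^\vee(k)=\int_0^\infty [W(t)f](x)u(x,k)dx$ then yields the identification
\[
(W(t)f)^\vee(k)=\frac{\overline{j_m(k)}}{2ik}\,e^{ikt}\,\hat{f_o}(k)+\varepsilon(t,k),
\]
where $\varepsilon$ absorbs all errors. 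The ``incoming'' contribution $-j_m(k)e^{-ikx+i\phi}$ of $u(x,k)$ produces only highly oscillatory integrals of the form $\int e^{-i(k+k')x+\cdots}dk'$, negligible for large $t$ by Riemann-Lebesgue, and Theorem~\ref{asi} controls the finite-$R$ approximations uniformly. Consequently, the leading-order parts of the spectral representations of $\dot W(t)f$ and $i\sqrt{H_1}W(t)f$ cancel.

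The residual $R(t):=\dot W(t)f-i\sqrt{H_1}W(t)f$ then breaks into (a) the $iq(t)/(2k)$ term of $\dot W(t)f$, cancelled up to second-order WKB amplitude corrections in $u(x,k)$ of size $\sim q(t)^2/k^3$, and (b) boundary errors from the stationary-phase analysis involving variations of $\phi(0,k,x)$ around $x=t$. The $L^2$ hypothesis on $q$ yields $\int q(t)^2 dt<\infty$, which handles (a) and the leading part of (b), while the pointwise bound $|q(x)|\leq C(1+x)^{-1/2}$ combined with $q\in L^2$ and Cauchy-Schwarz handles mixed contributions of the form $|q(t)|\cdot(1+t)^{-1/2}$. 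The main obstacle is to organize these error terms so that each is in $L^1(dt)$; the pointwise decay and the $L^2$ bound must be used simultaneously, which is why the theorem assumes both hypotheses.
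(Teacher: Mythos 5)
Your overall geometric picture is right---the modifier in $M(k,t)$ is designed to cancel the WKB phase $\phi(0,k,x)$ in the outgoing part of $u(x,k)$ near $x\approx t$, and the incoming part is killed by oscillation---but the proof strategy has a genuine gap: Cook's criterion cannot be verified under the stated hypotheses. Writing things in the spectral representation, the generalized transform does not depend on $t$, so
\[
\bigl(\dot W(t)f\bigr)^{\vee}(k)-ik\,\bigl(W(t)f\bigr)^{\vee}(k)=\Bigl(\tfrac{d}{dt}-ik\Bigr)\varepsilon(t,k),
\]
where $\varepsilon(t,k)=(W(t)f)^{\vee}(k)-\frac{\overline{j_m(k)}}{2ik}e^{ikt}\hat f_o(k)$ is exactly the error you must control. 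The available bounds on $\varepsilon$ are only $o(1)$ in $t$, not $O(L^1)$. Concretely: (i) the first-order WKB amplitude corrections do not cancel to leave an $O(q(t)^2/k^3)$ remainder as you claim; by (\ref{derivative}) the $x$-variation of $j_m(k,x)$ is of size $|q(x)|$ times an oscillatory factor, and after integration by parts the resulting error has $L^2(d\rho)$ norm $\lesssim\int_{|x-t|<\sqrt{t\omega(t)}}|q(x)|\,dx\lesssim\sqrt{\omega(t)}$ under (\ref{ops}) --- this tends to zero but is nowhere near integrable in $t$ (for $q\sim x^{-1/2-\epsilon}$ it is $\sim t^{-\epsilon/2}$, and at the $L^2$ endpoint even $q^2\sim x^{-1}$ fails to be integrable, so this is precisely the critical case where Dollard--Cook arguments break down); (ii) the replacement of $j_m(k,x)$ by $j_m(k)$ and the removal of the singular part of $d\rho$ rest on Theorem~\ref{asi}, which provides $L^2_{\rm loc}$ convergence with \emph{no rate whatsoever}, so no $L^1(dt)$ bound can be extracted from it even in principle.

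The paper's proof avoids this entirely: it never differentiates in $t$. Instead it localizes $W(t)f$ to $|x-t|<\sqrt{t\omega(t)}$ (Lemma~\ref{pyat1}), computes $\breve w_t(k)$ via (\ref{relation}), integrates by parts once using (\ref{derivative}) and the uniform oscillatory bound of Lemma~\ref{sing}, and shows directly that $e^{-ikt}\breve w_t(k)\to\frac{1}{2ik}\overline{j_m}(k)\hat f_o(k)\chi_\Theta(E)$ in $L^2(d\rho)$, with separate (qualitative) treatments of the spectral regions near $0$ and $\infty$. For that conclusion, errors of size $o(1)$ suffice, and unitarity of the spectral representation converts it into strong convergence of $e^{-it\sqrt{H_1}}W(t)f$. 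If you want to salvage your argument, you must abandon the time-integrability requirement and argue, as the paper does, that the limit of $e^{-ikt}\breve w_t(k)$ exists directly.
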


\begin{proof}
Since $W(t)$ is unitary, it is sufficient to prove existence of the
limit for functions $f(x)$ with $\hat{f}_o(k)$ being infinitely
smooth and with compact support on, say, $[a,b]\subset
\mathbb{R}^+$. Let
\[
\omega(t)=\left|\int_0^t |q(x)|dx/\sqrt{t}\right|
\]
Obviously, $\omega(t)\to 0$ as $t\to\infty$ but
$\omega(t)>Ct^{-1/2}$ unless $q=0$.
\begin{lemma} Consider
\[
g_t(x)=\int_0^\infty
\exp\left(-ikx+\frac{i}{2k}\int_0^tq(s)ds\right)\hat{f}_o(k)dk
\]
Then,
\begin{equation}
\int\limits_{|x|>\sqrt{t\omega(t)}} |g_t(x)|^2dx\to 0, \quad
t\to\infty\label{loca}
\end{equation}\label{pyat1}
\end{lemma}
\begin{proof}
We have
\[
\int x^2 |g_t(x)|^2dx\lesssim \int
|\hat{f}'_o(k)|^2dk+\left(\int_0^t |q(s)|ds\right)^2 \int
k^{-2}|\hat{f}_o(k)|^2 dk\lesssim 1+t\omega^2(t)
\]
which proves the claim.
\end{proof}
Lemma says that $W(t)f$ is localized on $|x-t|<\sqrt{t\omega(t)}$ in
$L^2$ sense. We also need the following lemma
\begin{lemma} We have
\[
\sup_{\alpha,\beta,k,t}\left|\int_{\alpha}^\beta
e^{ixk}[W(t)f](x)dx\right|<\infty
\]
\label{sing}
\end{lemma}
\begin{proof}
It is sufficient to prove
\[
\sup_{\alpha,\beta,k, T_1}\left|\int_0^\infty
\frac{e^{i(\xi-k)\beta}-e^{i(\xi-k)\alpha}}{\xi-k}\exp\left(\frac{iT_1}{\xi}\right)\hat{f}_o(\xi)d\xi\right|<\infty
\]
Since $\hat{f}_o$ is infinitely smooth with support on $[a,b]\subset
(0,\infty)$, we just need to show that, say,
\[
\sup_{\gamma,T_1}\left|\int_{1/2}^2
\frac{e^{i(\xi-1)\gamma}}{\xi-1}\exp\left(\frac{iT_1}{\xi}\right)d\xi\right|<\infty
\]
where integral is understood in v.p. sense. This is proved in lemma
\ref{osc} in Appendix.
\end{proof}

Denote the generalized Fourier transform of
$d_t(x)=\chi_{|x-t|<\sqrt{t\omega(t)}}W(t)f$ by $\breve{d}_t(k)$. We
have
\[
\breve{d}_t(k)=\frac{1}{2ik}\int_{t-\sqrt{t\omega(t)}}^{t+\sqrt{t\omega(t)}}
\Bigl(\overline{j_m(k,x)}e^{ixk-i\phi(0,k,x)}-j_m(k,x)e^{-ixk+i\phi(0,k,x)}\Bigr)[W(t)f]
(x)dx
\]
\[
=I_1-I_2
\]
where
\[
I_1=\frac{1}{2ik}\int_{t-\sqrt{t\omega(t)}}^{t+\sqrt{t\omega(t)}}
\overline{j_m(k,x)}e^{ixk-i\phi(0,k,x)}[W(t)f](x)dx
\]
Integrating by parts,
\begin{equation}
I_1=\frac{1}{2ik}\overline{j_m}(k,t-\sqrt{t\omega(t)})\exp(-i\phi(0,k,t-\sqrt{t\omega(t)}))
\int_{t-\sqrt{t\omega(t)}}^{t+\sqrt{t\omega(t)}}
e^{ixk}[W(t)f](x)dx+J_1\label{idio1}
\end{equation}
\[
J_1=\frac{1}{2ik}\int_{t-\sqrt{t\omega(t)}}^{t+\sqrt{t\omega(t)}}
(\overline{j_m(k,x)}\exp(-i\phi(0,k,x)))_x'\left(\int_x^{t+\sqrt{t\omega(t)}}e^{iku}
[W(t)f](u)du\right)dx
\]
By (\ref{derivative}) and the lemma \ref{sing},
\[
|J_1|\lesssim\int_{t-\sqrt{t\omega(t)}}^{t+\sqrt{t\omega(t)}}
|q(x)||j_m(k,x)|dx
\]
for any $k\in I\subset \mathbb{R}^+$. So, by Minkowski and
(\ref{uniform})
\begin{equation}
\left(\int_I
|J_1|^2d\rho(E)\right)^{1/2}\lesssim\int_{t-\sqrt{t\omega(t)}}^{t+\sqrt{t\omega(t)}}
|q(x)|\left(\int_I |j_m(k,x)|^2 d\rho(E)\right)^{1/2}dx \label{oups}
\end{equation}
\[
<C\sqrt{\omega(t)}\to 0
\]
The first term in (\ref{idio1}) can be written as
\[
K_1=\frac{1}{2ik}\overline{j_m}(k,t-\sqrt{t\omega(t)})
\hat{f}_o(k)\exp\left(ikt+\frac{i}{2k}\int_{t-\sqrt{t\omega(t)}}^t
q(u)du\right)
\]
\[
+\frac{1}{2ik}\overline{j_m}(k,t-\sqrt{t\omega(t)})\exp(-i\phi(0,k,t-\sqrt{t\omega(t)}))
\int_{|x-t|>\sqrt{t\omega(t)}} e^{ixk}[W(t)f](x)dx
\]
The second term converges to zero in $L^2(I,d\rho_s(E))$ because
\[
\int_{|x-t|>\sqrt{t\omega(t)}} e^{ixk}[W(t)f](x)dx
\]
is uniformly bounded  by lemma \ref{sing} and
\[
\int_I |j_m(k,t)|^2d\rho_s(E)\to 0, \quad t\to\infty
\]
by (\ref{second}). It also converges to zero in $L^2(I,\mu(k^2)dE)$
by (\ref{first}) and since
\[
\left\|\int_{|x-t|>\sqrt{t\omega(t)}}
e^{ixk}[W(t)f](x)dx\right\|_{L^2(dk,\mathbb{R})}\to 0, \quad
t\to\infty
\]
by (\ref{loca}). Therefore, by (\ref{first}) again

\[
e^{-ikt}K_1\to\frac{1}{2ik}\overline{j_m}(k)\hat{f}_o(k)\chi_\Theta(E),
\quad t\to\infty
\]
in $L^2(I,d\rho(E))$ where $\Theta$ is the complement of the support
of $d\rho_s(E)$ and $I\subset(0,\infty)$. The estimates for the
$I_2$ are similar, they yield
\[
\int_I |I_2|^2d\rho(E)\to 0, t\to\infty
\]

Thus, we have
\[
\int_I
\left|\breve{d}_t(k)e^{-ikt}-\frac{1}{2ik}\overline{j_m}(k)\hat{f}_o(k)
\chi_\Theta(E)\right|^2d\rho(E)\to 0,\quad t\to\infty
\]
for any $I\subset \mathbb{R}^+$. If the generalized Fourier
transform of $w_t(x)=W(t)f$ is denoted by $\breve{w}_t(k)$, then
\[
\int_I
\left|\breve{w}_t(k)e^{-ikt}-\frac{1}{2ik}\overline{j_m}(k)\hat{f}_o(k)
\chi_\Theta(E)\right|^2d\rho(E)\to 0,\quad t\to\infty
\]
for any interval $I=[\delta,M]$. On the other hand, notice that
$w_t(x)\in W^{l,2}(\mathbb{R})$ for any $l\geq 0$,
$\sup_{t>0}\|\partial^2_{xx}w_t(x)\|_2<\infty$, and $w_t(x)$ is
concentrated around $t$ by lemma~\ref{pyat1}. If $p(x)$ is
infinitely smooth, $p(x)=0$ for $x<0$ and $p(x)=1$ for $x>1$, then
$p(x)w_t(x)\in \cal{D}(H)$ and $\sup_{t}\|H(p(x)w_t(x))\|_2<\infty$.
Therefore,
\[
\limsup_{t\to\infty}\int_M^\infty \|\breve{w}_t(k)\|_2^2d\rho(E)\to
0, \quad M\to\infty
\]
and
\[
\int_\delta^\infty
\left|\breve{w}_t(k)e^{-ikt}-\frac{1}{2ik}\overline{j_m}(k)\hat{f}_o(k)
\chi_\Theta(E)\right|^2d\rho(E)\to 0
\]
for any $\delta>0$. \bigskip

Let us show that no $L^2(d\rho(E))$ norm of $\breve{w}_t(k)$ can
concentrate around zero energy. In other words, that
\begin{equation}
\limsup_{t\to\infty} \|P_{[0,\delta]}W(t)f\|\to 0\label{trick}
\end{equation}
as $\delta\to 0$.

By definition of $W(t)$, we have
\[
W(t)f=\upsilon_t''(x-t)
\]
where $\upsilon_t \in W^{2,2}(\mathbb{R})$ and its $L^2$ norm is
concentrated on $[-\sqrt{t},\sqrt{t}]$. Thus
\[
W(t)f=Hs_1(t)+s_2(t)
\]
where $s_1(t)\in \cal{D}(H)$ and
\[
\sup_{t>0}\Bigl(\|s_1(t)\|_2+\|Hs_1(t)\|_{2}\Bigr)<\infty
\]
For $s_2$,
\[
\|s_2\|_{L^2(\mathbb{R}^+)}\to 0, \quad t\to\infty
\]
Therefore,
\[
\int_0^\delta |\breve{w}_t(k)|^2d\rho(E)\lesssim \int_0^\delta
\left(E|\breve{s}_1(k)|^2+|\breve{s}_2(k)|^2\right)d\rho(E)\lesssim
\delta+\bar{o}(1), \quad t\to\infty
\]
Therefore, we have (\ref{trick}) and so
\[
\int_0^\infty
\left|\breve{w}_t(k)e^{-ikt}-\frac{1}{2ik}\overline{j_m}(k)\hat{f}_o(k)
\chi_\Theta(E)\right|^2d\rho(E)\to 0
\]
which proves the theorem.
\end{proof}\bigskip

If the potential is nonnegative then $H\geq 0$ and we do not have to
project to the positive part of the spectrum. We were working with
the group $e^{it\sqrt{H_1}}$ to guarantee stability in
$L^2(\mathbb{R})$, after all we established asymptotics in this
norm.
\bigskip

Using simple contour integration technique, one can show that

\[
\sup_{|x|<T}\int_0^\infty
\exp\left(-ikx+\frac{iT_1}{k}\right)\hat{f}_o(k)dk\to 0
\]
as $T_1\to\infty$ provided that infinitely smooth function $f_o$ has
compact support in $(0,\infty)$ (in fact, the bound is $\sim
T_1^{-1/2}$). That shows that the $L^2$ norm of $W(t)f$ might be
smeared over the interval $[t-\sqrt{t},t+\sqrt{t}]$ and the strength
of smearing depends on the size of
\[
\int_0^T q(x)dx
\]
This required us to make an additional assumption (\ref{ops}). In
the meantime, some results can be proved even without (\ref{ops}).
For example, the methods presented in \cite{den9}, allow us to prove
\begin{proposition}
Let  $q\in L^2(\mathbb{R}^+)$. If $H_1=P_{[0,\infty)}H$, then for
any $f\in L^2(\mathbb{R}^+)$
\[
\frac{1}{T}\int_0^T
\left\|e^{-it\sqrt{H_1}}W(t)f-G(f)\right\|_2^2dt\to 0
\]
\end{proposition}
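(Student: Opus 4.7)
The plan is to follow the strategy of Theorem~\ref{mmm}, replacing pointwise-in-$t$ convergence by Cesaro convergence so that the auxiliary bound (\ref{ops}) — whose role there was to force $\omega(t)\to 0$ — can be dropped.

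Using density and the uniform bound $\|V(t)\|=1$ for $V(t):=e^{-it\sqrt{H_1}}W(t)$, I would first reduce the statement to $f$ with $\hat f_o\in C_c^\infty((0,\infty))$. The natural candidate limit is $G(f)$ defined through the generalized Fourier transform by
\[
\widehat{G(f)}(k)=\frac{1}{2ik}\,\overline{j_m(k)}\,\hat f_o(k),\qquad k>0,
\]
with the singular part handled via (\ref{second}) in Theorem~\ref{asi}. Factorization (\ref{fact}) then shows that $G$ extends to an isometry on the chosen dense class.

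Since each $V(t)$ is unitary, the polarization identity gives
\[
\frac{1}{T}\int_0^T\|V(t)f-G(f)\|^2\,dt=\|f\|^2+\|G(f)\|^2-\frac{2}{T}\Re\int_0^T\langle V(t)f,G(f)\rangle\,dt,
\]
so once the isometry $\|G(f)\|=\|f\|$ is established the statement reduces to the weak Cesaro convergence
\[
\frac{1}{T}\int_0^T\langle V(t)f,G(f)\rangle\,dt\longrightarrow\|G(f)\|^2.
\]
To verify this I would transfer $e^{-it\sqrt{H_1}}$ onto $G(f)$, expand $W(t)f$ as in Theorem~\ref{mmm}, and repeat the integration by parts that isolates the leading term $K_1$ and the remainder $J_1$. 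The $J_1$-contribution is bounded pointwise in $t$ by $C\sqrt{\omega(t)}$ through (\ref{oups}); after squaring and $t$-averaging its net contribution is controlled by $T^{-1}\int_0^T \omega(t)^2\,dt$, which I would show tends to $0$ by Fubini and the $L^2$ integrability of $q$.

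The main obstacle is the $K_1$-contribution, whose essential piece is the residual phase
\[
\exp\!\left(\frac{i}{2k}\int_{t-\sqrt{t\omega(t)}}^{t}q(s)\,ds\right),
\]
left over after the standard factor from $M(t,k)$ is extracted. Under (\ref{ops}) this phase tends to $1$ pointwise and one recovers Theorem~\ref{mmm}; without (\ref{ops}) it merely oscillates, and the $t$-Cesaro mean must produce the normalization $\overline{j_m(k)}$ consistent with the target pairing. My approach would combine Theorem~\ref{asi} (to pass $j_m(k,R)\to j_m(k)$ under the integral via (\ref{first}) and dominated convergence) with an oscillatory integral estimate of the type used in Lemma~\ref{osc} in the Appendix, applied to the slowly growing phase built from $\int_0^t q$ (whose growth is $O(\sqrt{t})$ by Cauchy--Schwarz). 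This step is the technical heart of the argument, and is precisely where the methods of~\cite{den9}, previously used in the OPUC and Dirac settings, are invoked to conclude.
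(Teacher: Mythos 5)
Your overall strategy is the paper's: rerun the proof of Theorem \ref{mmm} and replace the pointwise--in--$t$ smallness supplied by (\ref{ops}) with smallness in Cesaro mean. But the execution is garbled at precisely the step that constitutes the entire proof. You assert that the $J_1$--contribution is ``bounded pointwise in $t$ by $C\sqrt{\omega(t)}$ through (\ref{oups})''; the passage from the integral in (\ref{oups}) to the bound $C\sqrt{\omega(t)}$ is exactly where (\ref{ops}) is used (with $|q(x)|\lesssim (1+x)^{-1/2}$ one gets $\int_{t-\sqrt{t\omega(t)}}^{t+\sqrt{t\omega(t)}}|q|\lesssim t^{-1/2}\sqrt{t\omega(t)}=\sqrt{\omega(t)}$), and without (\ref{ops}) no such pointwise bound is available. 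Consequently the quantity whose Cesaro mean must be shown to vanish is not $T^{-1}\int_0^T\omega(t)^2\,dt$ --- that tends to $0$ trivially because $\omega(t)\to 0$ boundedly for any $q\in L^2$, so if this were the relevant quantity no averaging would be needed and Theorem \ref{mmm} itself would hold without (\ref{ops}) --- but rather
\[
\frac{1}{T}\int_0^T\Bigl(\int_{t-\sqrt{t\omega(t)}}^{t+\sqrt{t\omega(t)}}|q(x)|\,dx\Bigr)^2dt .
\]
The proof of the proposition is exactly the estimate of this expression: by Cauchy--Schwarz the inner square is $\lesssim \sqrt{t}\int_{t-C\sqrt{t}}^{t+C\sqrt{t}}q^2(x)\,dx$, and by Fubini (each fixed $x$ belongs to the window for a set of $t$ of measure $\lesssim\sqrt{x}$, on which $\sqrt{t}\sim\sqrt{x}$) the average is $\lesssim T^{-1}\int_0^{CT}(1+x)q^2(x)\,dx\to 0$. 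Your appeal to ``Fubini and the $L^2$ integrability of $q$'' shows you have the right tool in mind, but it is attached to the wrong quantity.

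The same estimate also disposes of what you call the main obstacle: the residual phase in $K_1$ differs from $1$ by at most $\frac{1}{2k}\int_{t-\sqrt{t\omega(t)}}^{t}|q(s)|\,ds$, which is controlled by the identical Cesaro bound; no oscillatory--integral argument of the Lemma \ref{osc} type is needed, and no cancellation has to ``produce'' $\overline{j_m(k)}$ --- that factor already comes from Theorem \ref{asi}, exactly as in the proof of Theorem \ref{mmm}. The remaining ingredients of your proposal (reduction by unitarity and polarization to weak Cesaro convergence of $\langle V(t)f,G(f)\rangle$, identification of $G(f)$ through its generalized Fourier transform $\frac{1}{2ik}\overline{j_m(k)}\hat{f}_o(k)$ on the a.c.\ part, and the isometry computed from (\ref{fact})) are sound and consistent with the paper.
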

The formula for $G(f)$ can be given explicitly in terms of
generalized Fourier transform. $\Bigl($ We had to use (\ref{ops}) at
(\ref{oups}). In the meantime, we have
\begin{eqnarray*}
\frac{1}{T}\int_0^T \left(\int_{t-\sqrt{t}}^{t+\sqrt{t}}
|q(x)|dx\right)^2dt\lesssim \frac{1}{{T}} \int_0^T
\sqrt{t}\int_{t-\sqrt{t}}^{t+\sqrt{t}} q^2(x)dxdt\\
\lesssim \frac{1}{T} \int_0^{CT} (1+x)q^2(x)\to 0, \quad T\to\infty
\end{eqnarray*}
assuming only $q\in L^2(\mathbb{R}^+)$. That leads to the proof of
proposition.$\Bigr)$
\bigskip

The existence of wave operators in the form we proved also
establishes the long-time asymptotics for the initial data from the
a.c. part of the spectrum. If $f$, the initial value for
$e^{it\sqrt{H}}$ is compactly supported, then its spectral measure
has a.c. part supported on $\mathbb{R}^+$. If $f_1=P_{\rm ac}f,
f_2=P_{\rm s}f$, and $y_1(t)=\exp(it\sqrt{H})f_1,
y_2(t)=\exp(it\sqrt{H})f_2$, then $y_1\perp y_2$ for any $t$. We do
not know whether $y_2$ travels ballistically in $L^2$ norm, but
since $y_1$ does (for most time if $q\in L^2(\mathbb{R}^+)$ and for
all times if we additionally assume (\ref{ops})), the whole wave
$y=y_1+y_2$ must travel ballistically too. Indeed,
\begin{eqnarray*}
\int_{t-\sqrt{t}}^{t+\sqrt{t}} |y(x,t)|^2dx=
\int_{t-\sqrt{t}}^{t+\sqrt{t}}
|y_1(x,t)|^2dx+\int_{t-\sqrt{t}}^{t+\sqrt{t}} |y_2(x,t)|^2dx\\
+2\Re \int_{t-\sqrt{t}}^{t+\sqrt{t}} y_1(x,t)\overline{y_2(x,t)}dx
\end{eqnarray*}
and since
\[
\int_{t-\sqrt{t}}^{t+\sqrt{t}} y_1(x,t)\overline{y_2(x,t)}dx\to 0
\]
by orthogonality and localization of $y_1$, we have that
\[
\limsup_{t\to\infty}\int_{t-\sqrt{t}}^{t+\sqrt{t}} |y(x,t)|^2dx\geq
\|f_1\|_2^2
\]
if $q$ satisfies conditions of theorem \ref{mmm}. If $q$ is only
square summable, we still have ballistic propagation for some part
of the wave for most of the time. Notice also that the speed of
propagation is always finite, i.e. if the initial data is compactly
supported on $(0,a)$ then $y(x,t)=0$ for $x>a+t$. Thus the motion
can not be faster then ballistic. The finite speed of propagation
follows, e.g., from the Duhamel expansion.\bigskip

As was mentioned earlier, for any $p>2$ there is some $q\in
L^p(\mathbb{R}^+)$ such that the spectrum is  pure point. This, of
course, leads to localization of the wave. Indeed, for any $f$,
\[
P_{[0,\infty)}f=\sum_{j} \langle f,e_j\rangle e_j
\]
where $e_j$ are $L^2(\mathbb{R}^+)$--normalized eigenfunctions
corresponding to nonnegative eigenvalues $\lambda_j$. Therefore,
\[
e^{it\sqrt{H_1}}P_{[0,\infty)}f=\sum_{j=1}^N e^{i\sqrt{\lambda_j}t}
\langle f,e_j\rangle e_j+\epsilon_N(t)
\]
where $\sup_t\|\epsilon_N(t)\|_2\to 0$ as $N\to \infty$. The first
term, though, is localized around the origin in $L^2(\mathbb{R}^+)$
for any time so we have localization of the whole wave.
\section{Appendix}

In this appendix, we collected some simple results we used in the
main text.

\begin{lemma}\label{hf}
Assume that $g(k)$ is positive harmonic function in $\mathbb{C}^+$
and
\[
yg(iy)\to 0
\]
as $y\to+\infty$. Then, $g(k)=0$
\end{lemma}
\begin{proof}
We have integral representation for $g$
\begin{equation}
g(iy)=\beta y+\int_\mathbb{R}\frac{y}{y^2+t^2}d\tau(t)\label{ir}
\end{equation}
where $\beta\geq 0$ and for the positive measure $d\tau$
\[
\int_\mathbb{R} \frac{d\tau(t)}{1+t^2}<\infty
\]
Multiply the both sides of (\ref{ir}) by $y$ and take $y\to\infty$.
Then, $\beta=0$ and $d\tau=0$  so we get the statement of the lemma.
\end{proof}

\begin{lemma}\label{l6}
 Assume that $f(z)$ is Herglotz function in $\mathbb{C}^+$.
Take any interval $[a,b]$ and $\delta>0$. Then,
\begin{equation}
0<C_1<\left|\frac{1}{f(z)}\exp\left(\frac{1}{\pi
i}\int_{a-\delta}^{b+\delta}\frac{1}{\xi-z}\ln
|f(\xi)|d\xi\right)\right|<C_2\label{ini1}
\end{equation}
uniformly in $\Omega_1=\{0<\Im z<1, a<\Re z<b\}$. The constants
$C_{1(2)}$ depend only on $|f(i)|$.
\end{lemma}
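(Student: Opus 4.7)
The plan is to set $u(z) = \exp P(z)$, where $P(z) = \frac{1}{\pi i}\int_{a-\delta}^{b+\delta}\frac{\ln|f(\xi)|}{\xi-z}\,d\xi$, and study the auxiliary analytic function $v(z) = f(z)/u(z)$; the two inequalities of \eqref{ini1} amount to upper and lower bounds on $|v|$. The first step is a Sokhotski--Plemelj computation: as $z \to \xi_0 + i0$ with $\xi_0 \in (a-\delta, b+\delta)$ a Lebesgue point of $\ln|f|$, one has $\Re P(\xi_0 + i0) = \ln|f(\xi_0)|$, while for $\xi_0 \notin [a-\delta, b+\delta]$ the boundary value is $0$. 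Hence $|v(\xi_0+i0)|=1$ on $(a-\delta, b+\delta)$ and $|v(\xi_0+i0)|=|f(\xi_0)|$ off $[a-\delta, b+\delta]$.

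Because $|v|\equiv 1$ on the open interval $(a-\delta, b+\delta)$, the Schwarz reflection $v^*(z) := 1/\overline{v(\bar z)}$ extends $v$ analytically to $\mathbb{C} \setminus \bigl((-\infty, a-\delta]\cup[b+\delta, +\infty)\bigr)$. The rectangle $\widetilde{\Omega} = \{a<\Re z<b,\ -1<\Im z<1\}$ sits inside this larger domain, so the maximum modulus principle applied to $v$ and to $1/v$ on $\widetilde{\Omega}$ reduces the problem to uniform bounds on $|v|$ and $|1/v|$ over $\partial\widetilde{\Omega}$.

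On the horizontal edges $\Im z = \pm 1$, $a\leq \Re z \leq b$, the Schwarz--Pick lemma applied to the disc-valued map $F(z) = (f(z)-f(i))/(f(z)-\overline{f(i)})$ (satisfying $F(i)=0$) yields $|f(z)|$ comparable to $|f(i)|$ with constants depending only on $a, b$; in the same region $\log|u(z)| = \Re P(z)$ is a Poisson-type integral of $\ln|f|\cdot\chi_{[a-\delta,b+\delta]}$ whose modulus is bounded by $\int_{a-\delta}^{b+\delta}|\ln|f(\xi)||\,d\xi$. The subtle case is the vertical edges $\Re z = a$ or $b$ with $|\Im z|\leq 1$, where neither $|f|$ nor $|u|$ is individually bounded as $\Im z \to 0$, but their ratio $|v|$ must be. To see this I would represent $\log|v|$ on $\mathbb{C}^+$ as a Poisson integral of its boundary data, which vanishes on $(a-\delta, b+\delta)$: for $\Re z\in[a,b]$ and $\xi\notin [a-\delta, b+\delta]$ one has $|\xi-\Re z|\geq\delta$, so the Poisson kernel is uniformly bounded by $1/\delta^2$ and the estimate reduces to the standard Herglotz fact $\int_\mathbb{R}|\ln|f(\xi)||/(1+\xi^2)\,d\xi<\infty$.

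The main obstacle is tracking the constants so that they depend only on $|f(i)|$ (together with the fixed data $a,b,\delta$). The underlying bookkeeping is that $\arg f/\pi$ is a boundary function with values in $[0,1]$, so by standard Herglotz/Kolmogorov estimates its conjugate harmonic function $\log|f|$ has local $L^p$ norms ($p<\infty$) controlled in terms of $|f(i)|$; substituting these bounds into the Poisson-integral estimates above produces the required uniform constants $C_1, C_2$. Assembling this Hilbert-transform bookkeeping together with the reflection/maximum-principle scheme is the technical heart of the argument.
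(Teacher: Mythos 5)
Your argument is correct in substance but follows a genuinely different route from the paper. The paper transplants everything to the unit disc via $k(z)=(z-i)(z+i)^{-1}$, writes $F=f(z(k))$ through the Szeg\H{o} outer representation, and then bounds two error terms: the integral over the image arc of the \emph{difference} between the transplanted Cauchy kernel and the Szeg\H{o} kernel, and the integral over the complementary arc where the Szeg\H{o} kernel is bounded; both are controlled by $\|\ln|F|\|_{L^1(\mathbb{T})}$, which Kolmogorov's theorem bounds in terms of $|F(0)|=|f(i)|$. You stay in the half-plane, where the computation is cleaner: since $\Re\bigl(\tfrac{1}{\pi i}\tfrac{1}{\xi-z}\bigr)$ is exactly the Poisson kernel, $\ln|f/u|$ is precisely the Poisson integral of $\ln|f|$ restricted to $\mathbb{R}\setminus[a-\delta,b+\delta]$, and for $z\in\Omega_1$ that kernel is bounded there by $C(a,b,\delta)(1+\xi^2)^{-1}$, so everything reduces to $\int_{\mathbb{R}}|\ln|f(\xi)||(1+\xi^2)^{-1}d\xi\leq C(|f(i)|)$ (Kolmogorov applied to $f$ and to $-1/f$). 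There is no kernel-difference term to estimate, which is the price the paper pays for the conformal change of variable. Two caveats. First, the one nontrivial input is the same in both proofs and you should state it explicitly: a Herglotz function is outer, i.e.\ $\ln|f|$ equals the Poisson integral of its boundary values with no singular or linear term; without this, ``represent $\log|v|$ as a Poisson integral of its boundary data'' is unjustified (the paper invokes the same fact as ``$F$ is outer''). Second, the reflection/maximum-modulus/Schwarz--Pick scaffolding is unnecessary and is the only fragile part of your write-up: reflecting across an interval where $|v|=1$ merely a.e.\ itself requires knowing $v$ has no singular inner factor supported there, which is essentially the outerness you need anyway. The Poisson-integral estimate you derive for the vertical edges holds verbatim at every point of $\Omega_1$ and already finishes the proof.
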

\begin{proof} Map $\mathbb{C}^+$ to $\mathbb{D}$ by
$k(z)=(z-i)(z+i)^{-1}$. If $z(k)=i(k+1)(1-k)^{-1}$ and
$F(k)=f(z(k))$, then (\ref{ini1}) is equivalent to proving
\[
0<C_1<\left|\frac{1}{F(k)} \exp\left(\frac{1}{\pi i}
\int_{\theta_1-\delta_1}^{\theta_2+\delta_2} \frac{1}{z(u)-z(k)}\ln
|F(u)|dz(u) \right)\right|<C_2
\]
in the subset $\Omega_2=k(\Omega_1)$ of $\mathbb{D}$ adjacent to the
arc $k([a,b])=(e^{i\theta_1}, e^{i\theta_2})$. Since $F$ is outer,
we have
\[
F(k)=\exp\left(\frac{1}{2\pi}\int_{-\pi}^\pi
\frac{e^{i\theta}+k}{e^{i\theta}-k} \ln
|F(e^{i\theta})|d\theta\right)
\]
After substitution, it is sufficient to show that
\begin{equation}
\left|\int_{\theta_1-\delta_1}^{\theta_2+\delta_1}
\left(\frac{(1-k)e^{i\theta}}{\pi(1-e^{i\theta})(e^{i\theta}-k)}-
\frac{e^{i\theta}+k}{2\pi(e^{i\theta}-k)}\right)\ln
|F(e^{i\theta})|d\theta\right|<C \label{e1}
\end{equation}
\begin{equation}
\left|\int_{\mathbb{T}\backslash
[\theta_1-\delta_1,\theta_2+\delta_1]}\frac{e^{i\theta}+k}{e^{i\theta}-k}
\ln |F(e^{i\theta})|d\theta\right|<C\label{e2}
\end{equation}
Notice that we have
\[
\int_\mathbb{T} |F(e^{i\theta})|^pd\theta<C(p,|F(0)|), \quad p<1
\]
by Kolmogorov's theorem and $\ln|F(e^{i\theta})|\in
L^1(\mathbb{T})$. These estimates prove (\ref{e1}) and (\ref{e2}) in
$\Omega_2$.
\end{proof}
\begin{lemma}\label{osc}
The following estimate holds
\[
\sup_{\gamma,T_1}\left|\int_{1/2}^2
\frac{e^{i(\xi-1)\gamma}}{\xi-1}\exp\left(\frac{iT_1}{\xi}\right)d\xi\right|<\infty
\]
where integral is understood in v.p. sense.
\end{lemma}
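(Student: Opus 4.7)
The plan is to realise this principal-value integral as a contour integral in the complex $\xi$-plane and then deform it to a path on which both oscillating exponentials are simultaneously controlled. With $F(z) = e^{i\gamma(z-1) + iT_1/z}/(z-1)$, the standard half-residue calculation gives
\[
\mathrm{v.p.}\int_{1/2}^2 F(\xi)\,d\xi \;=\; \int_\Gamma F(z)\,dz \;\pm\; i\pi\,e^{iT_1},
\]
for any contour $\Gamma$ from $1/2$ to $2$ in $\mathbb{C}\setminus\{0\}$ homotopic to the real segment modulo a small semicircular detour around $z=1$ (the sign depending on the side). The residue term is manifestly bounded, so everything reduces to choosing $\Gamma$ so that $\int_\Gamma F$ is uniformly controlled in $(\gamma, T_1)$.

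Since $|e^{i(\gamma z + T_1/z)}| = \exp\bigl[\Im z\,(T_1/|z|^2 - \gamma)\bigr]$, the relevant geometric invariant in the same-sign case $\gamma, T_1 > 0$ is the \emph{critical radius} $r := \sqrt{T_1/\gamma}$: in the upper half-plane the exponential is bounded by $1$ precisely when $|z|\geq r$, and dually in the lower half-plane. If $\gamma T_1 \leq 0$, a single half-plane is favourable throughout, and $\Gamma$ is taken as a short arc in that half-plane keeping $|z - 1| \geq c_0 > 0$; if $\gamma T_1 > 0$ with $r \notin [1/2, 2]$, a one-sided deformation still works. If $r \in [1/2, 2]$ with $|r - 1|\geq 1/4$, $\Gamma$ splits at $z = r$ (lower half-plane on $[1/2, r]$, upper on $[r, 2]$), still maintaining $|z-1| \geq c_0$. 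In all such cases $|F| = O(1)$ on a bounded-length contour, so $\int_\Gamma F$ is $O(1)$. When $|\gamma| + |T_1| = O(1)$ the integrand is trivially controlled by $C/|\xi-1|$ and the v.p.\ is $O(1)$ directly.

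The main obstacle is the subcase $|\gamma| + |T_1| \gg 1$ with $r \approx 1$, i.e.\ $\gamma \approx T_1$ both large, where the critical circle collides with the pole $z = 1$. On the critical circle $z = re^{i\theta}$ one computes $\gamma z + T_1/z = 2\gamma r \cos\theta$, which is real, so $|e^{i\phi(z)}| \equiv 1$; but the stationary point $\theta = 0$ of this phase coincides with the point nearest to $z = 1$, and $1/|z-1|$ is not arc-length integrable over a neighbourhood of it. My plan is to deform $\Gamma$ to follow the critical circle on the arc $|\theta| \leq \pi/2$ connected to $1/2, 2$ by short radial segments. On the ``far'' portion $|\theta|\geq \theta_0$ (fixed small $\theta_0$) one has $|z - 1|\geq c_1 > 0$ uniformly for $r$ near $1$, giving trivial control. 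On the ``near'' portion $|\theta|< \theta_0$ use $2\gamma r\cos\theta = 2\gamma r - \gamma r\theta^2 + O(\theta^4)$ and $z - 1 = (r-1) + ir\theta + O(\theta^2)$; after the rescaling $s = \theta\sqrt{\gamma r}$ (which opens up to $|s|\to\infty$ as $\gamma r\to\infty$) the local contribution collapses to the Fresnel-type integral
\[
i\,e^{2i\gamma r}\int e^{-is^2}/(\alpha + is)\,ds, \qquad \alpha := (r-1)\sqrt{\gamma/r} \in \mathbb{R}.
\]
Using $1/(\alpha+is) = \int_0^\infty e^{-t(\alpha+is)}\,dt$ for $\alpha > 0$ (and the reflected $1/(\alpha+is) = -\int_0^\infty e^{t(\alpha+is)}\,dt$ for $\alpha < 0$), interchanging orders of integration and evaluating the Gaussian in $s$, this reduces to a constant multiple of $\int_0^\infty e^{-t|\alpha| + it^2/4}\,dt$. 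A Cauchy-theorem shift of the contour in this last integral (completing the square in $t$ to expose the Fresnel kernel $e^{iu^2/4}$ on a horizontal line at height $2|\alpha|$, then deforming via a quarter-circle back to the positive real axis) shows it is uniformly bounded in $\alpha \in \mathbb{R}$; the residual integral along $[0,2i|\alpha|]$ is itself a Fresnel integral and is therefore bounded. This closes the argument.
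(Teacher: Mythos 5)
Your overall strategy is the right one and is essentially the strategy of the paper: reduce to the genuinely hard regime where both frequencies are large and comparable, deform the contour onto the circle where the combined phase $\gamma z+T_1/z$ is real (your critical circle $|z|=\sqrt{T_1/\gamma}$; in the paper's normalized variables $\int_a^{3a}e^{iT(\xi\pm\xi^{-1})}(\xi-2a)^{-1}d\xi$ this is the unit circle, with the collision occurring at $a\sim 1/2$), and then do a local analysis near the point of the critical circle closest to the pole. Your case analysis away from the collision regime is correct, and the half-residue bookkeeping is fine since the residue $e^{iT_1}$ is unimodular.

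There is, however, a genuine gap in the one step that carries all the difficulty: the claim that on the fixed angular window $|\theta|<\theta_0$ the arc integral ``collapses'' to $i\,e^{2i\gamma r}\int e^{-is^2}(\alpha+is)^{-1}ds$ after the rescaling $s=\theta\sqrt{\gamma r}$. The discarded phase error is $\gamma r\,O(\theta^4)=O(s^2\theta^2)$, which over the whole window is as large as $\gamma r\,\theta_0^4\to\infty$; a naive bound on the difference (using $|(\alpha+is)^{-1}|\le |s|^{-1}$) gives an error of order $\theta_0^4\,\gamma r$, which diverges. So the reduction is false as stated: you must either shrink the near zone to $|\theta|\lesssim(\gamma r)^{-3/8}$ (say) and insert a new intermediate region handled by non-stationary phase (there $|\partial_\theta(2\gamma r\cos\theta)|\sim\gamma r|\theta|$ is large), or avoid the expansion altogether. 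A second, smaller issue is the degenerate case $r=1$ ($\alpha=0$), where the critical circle passes through the pole, the contour integral itself must be taken in the principal-value sense, and your Laplace representation $1/(\alpha+is)=\int_0^\infty e^{-t(\alpha+is)}dt$ is unavailable. Both problems are repaired at once by the paper's device, which you may as well adopt: since the phase $2\gamma r\cos\theta$ is even in $\theta$, split $\bigl(re^{i\theta}-1\bigr)^{-1}ire^{i\theta}\,d\theta$ into its even and odd parts; the odd part integrates to zero over the symmetric window, and the even part is dominated by the Poisson kernel $|r-1|/\bigl((r-1)^2+\theta^2\bigr)$, whose integral is at most $\pi$ uniformly in $r$ (including $r=1$). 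This replaces your entire Fresnel/Gaussian computation and requires no expansion of the phase at all.
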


\begin{proof} Interesting cases are when $|T_1|$ or $|\gamma|$ are large.
Making the substitution $\gamma(\xi-1)=\xi_1$ and absorbing the
constants, we need to show
\[
\sup_{\gamma, T_2}\left|\int_{-\gamma/2}^{\gamma}
\frac{e^{i\xi_1}}{\xi_1}\exp\left(\frac{iT_2}{\xi_1+\gamma}\right)d\xi_1\right|<C
\]
or, essentially,
\[
\sup_{\gamma, T_2}\left|\int_{-\gamma/2}^{\gamma/2}
\frac{e^{i\xi_1}}{\xi_1}\exp\left(\frac{iT_2}{\xi_1+\gamma}\right)d\xi_1\right|<C
\]
Let $\xi_2=\xi_1+\gamma$,
then
\[
\sup_{\gamma, T_2}\left|\int_{\gamma/2}^{3\gamma/2}
\frac{\exp\left({i(\xi_2+\frac{T_2}{\xi_2}})\right)}{\xi_2-\gamma}d\xi_2\right|=
\sup_{a, T}\left|\int_{a}^{3a}
\frac{\exp\left({iT(\xi\pm {\xi}^{-1}})\right)}{\xi-2a}d\xi\right|
\]
We can always assume that $a,T>0$. Consider the first case, i.e.

\[
\sup_{a, T}\left|\int_{a}^{3a}
\frac{\exp\left({iT(\xi- {\xi}^{-1}})\right)}{\xi-2a}d\xi\right|
\]

The boundedness of the last expression can be proved by contour integration.
Indeed, take the contour $\Gamma_{a,\delta}=\gamma_1\cup\gamma_2\cup\gamma_3$, where
\[
\gamma_1=\{\delta<|\xi-2a|<a, \quad \Im\xi=0\}
\]
\[
\gamma_2=\{\Im \xi\geq 0, |\xi-2a|=a
\]
\[
\gamma_3=\{\Im \xi\geq 0, |\xi-2a|=\delta
\]
Notice that
 $\Im \left(\xi-\xi^{-1}\right)\geq 0$ on $\Gamma_{a,\delta}$. Therefore,
\[
\left|\int_{a}^{3a} \frac{\exp\left({iT(\xi-
{\xi}^{-1}})\right)}{\xi-2a}d\xi\right|=\lim_{\delta\to
0}\left|\int_{\gamma_2\cup\gamma_3}
\frac{\exp\left({iT(\xi-{\xi}^{-1}})\right)}{\xi-2a}d\xi\right|<C
\]
with constant $C$ independent of $a$ and $T$. Consider the second
case,
\[
\sup_{a, T}\left|\int_{a}^{3a}
\frac{\exp\left({iT(\xi+ {\xi}^{-1}})\right)}{\xi-2a}d\xi\right|
\]
If $\Im \xi, \Re \xi>0$ and $|\xi|>1$, then the same contour integration gives boundedness. Thus,
\[
\sup_{a>1, T>0}\left|\int_{a}^{3a}
\frac{\exp\left({iT(\xi+ {\xi}^{-1}})\right)}{\xi-2a}d\xi\right|<C
\]
If $a<1/3$, we can again use contour integration but in $\{\Im
\xi<0, \Re \xi>0\}$. The contour should again be taken as the union
of semicircles. We are left with $a\sim 1/2$ case. If $a>1/2$, then,
again, use contour integration where $\gamma_{1(3)}$ are the same
but $\gamma_2$ is the union of arcs on the circles $|\xi-2a|=a$ and
$|\xi|=1$. Then,

\[
\sup_{1>a>1/2, T>0}\left|\int_{a}^{3a} \frac{\exp\left({iT(\xi+
{\xi}^{-1}})\right)}{\xi-2a}d\xi\right|\lesssim 1 +\sup_{1>a>1/2,
T>0}\left|\int_{-\theta_1}^{\theta_1}\frac{e^{iT\cos\theta}}{2a-e^{i\theta}}e^{i\theta}d\theta\right|
\]
where $\theta_1$ depends on $a$. The Taylor expansion around zero
gives
\[
\sup_{1>a>1/2,
T>0}\left|\int_{-\theta_1}^{\theta_1}\frac{e^{iT\cos\theta}}{2a-e^{i\theta}}
e^{i\theta}d\theta\right| \lesssim1+ \sup_{a>1/2,
T>0}\left|\int_{-\theta_1}^{\theta_1}\frac{e^{iT\cos\theta}}{2a-1-{i\theta}}d\theta\right|
\]
\[
\lesssim 1+\int_{-\theta_1}^{\theta_1}
\frac{2a-1}{(2a-1)^2+\theta^2}d\theta<C
\]
by canceling the odd part. The case $a<1/2$ is similar and the
calculation for $a=1/2$ follows by similar contour integration and
from the bound
\[
\sup_{\delta>0, T>0}\left|\int_{\delta<|\theta|<\theta_1}\frac{e^{iT\cos\theta}}{1-e^{i\theta}}e^{i\theta}d\theta\right|<C
\]
\end{proof}
\bigskip\bigskip{\bf Acknowledgements.} This research was supported by Alfred
P. Sloan Research Fellowship and NSF Grant DMS-0758239.

\end{document}